\newtheorem{theorem}{Theorem}
\newtheorem{lemma}{Lemma}
\newtheorem{remark}{Remark}
\newenvironment{proof}{{\bf Proof:}}{\hfill$\bull$\medskip}
\newcommand{\bull}{\vrule height 1.8ex width 1.0ex depth 0ex}
\begin{document}
\begin{center}
{\bf\LARGE Error Bounds for Numerical Integration of Oscillatory Bessel
Transforms with Algebraic or Logarithmic
Singularities}\footnote{This work is supported by National Natural Science Foundation of China (Grant Nos.11301125, 11071260, 11226305,11301222), Scientific Research Startup Foundation of Hangzhou
Dianzi University (KYS075613017), the Fundamental Research Funds for
the Central Universities (No. 21612336) and NSF of Guangdong (No.
S2012040007860).}.

\vspace{0.4in}{\small Hongchao Kang\footnote{E-mail address: laokang834100@163.com}}

\vspace{0.1in}{\small 
Department of Mathematics, School of Science, Hangzhou Dianzi University, Hangzhou, Zhejiang 310018, PR China}

\vspace{0.4in} {\small Congpei An\footnote{E-mail address: tancpei@jnu.edu.cn,\,andbach@163.com}}

\vspace{0.1in}{\small Department of Mathematics, Jinan University,
Guangzhou 510632, China}

\end{center}

\noindent{\bf Abstract} In this paper, we present and analyze the Clenshaw-Curtis-Filon
methods for computing two classes of oscillatory Bessel transforms
with algebraic or logarithmic singularities. More importantly, for
these quadrature rules we derive new computational sharp error
bounds by rigorous proof. These new error bounds share the
advantageous property that some error bounds are optimal on $\omega$
for fixed $N$, while other error bounds are optimal on $N$ for fixed
$\omega$. Furthermore, we prove from the presented error bounds in
inverse powers of $\omega$ that the accuracy improves greatly, for
fixed $N$, as $\omega$ increases.

\vspace{0.05in} \noindent \textbf{Keywords:} oscillatory, Bessel transforms singularities,
Clenshaw-Curtis-Filon methods, quadrature rules error
bounds.

\vspace{0.05in} \noindent\\


\section{Introduction}
\setcounter{equation}{0} \setcounter{lemma}{0}
\setcounter{proposition}{0} \setcounter{corollary}{0}

\hspace{0.1cm}   Highly oscillatory Bessel transforms arise widely
in mathematical and numerical modeling of oscillatory phenomena in
many areas of sciences and engineering such as astronomy,
electromagnetics, acoustics, scattering problems, physical optics,
electrodynamics, and applied mathematics
\cite{Arfken,Bao1,Davies1,Huybrechs1}. In this paper, we focus on
new computational sharp error bounds of the quadrature rules for
singular oscillatory Bessel transforms of the forms
\begin{eqnarray}\label{1.1}
I_1[f]&=&\int_0^bx^\alpha f(x)J_m(\omega x)dx,\\
\label{1.2} I_2[f]&=&\int_0^bx^\alpha \ln(x)f(x)J_m(\omega x)dx,
\end{eqnarray}
where $f(x)$ is suitably smooth in $[0,b]$, $J_m(z)$ is the Bessel
function \cite{Abram} of the first kind and of order $m$ with
$Re(m)>-1$, $\omega$ is a large parameter, $b$ are real and finite,
and $\alpha>-1$. In particular, it should be noticed that transforms
\eqref{1.1} and \eqref{1.2} are integrals with algebraic and
logarithmic singularities, respectively.

\hspace{0.1cm} In most of the cases, such integrals cannot be
calculated analytically and one has to resort to numerical methods
\cite{Davis}. The numerical evaluation can be difficult when the
parameter $\omega$ is large, because in that case the integrand is
highly oscillatory. The singularities of algebraic or logarithmic
type and possible high oscillations of the integrands in (1.1) and
(1.2) make the above integrals very difficult to approximate
accurately using standard methods, e.g., \emph{Gaussian quadrature
rules}. It is well known \cite{Davis} that a prohibitively large
number of quadrature points is needed if one uses a classic rule
such as Gaussian quadratures, or any quadrature method based on
(piecewise) polynomial interpolation of the integrands.

In the last few decades, much progress has been made in developing
numerical schemes for generalized Bessel transform
$\int_a^bf(x)J_m(\omega g(x))dx$ without singularity. For example,
the modified Clenshaw-Curtis method \cite{Piessens1} was introduced
for efficiently computing $\int_0^1f(x)J_m(\omega x)dx$ for $m$
being an integer in 1983; the Levin method \cite{Levin}, Levin-type
method \cite{Olver}, and generalized quadrature rules
\cite{Evans,Xiang} were also available for approximating
$\int_a^bf(x)J_m(\omega x)dx$ for $Re(m)>-1$. However, \emph{the
Levin method}, \emph{Levin-type methods} and \emph{generalized
quadrature rules} cannot be used if $0 \not\in[a, b]$. In addition,
based on a diffeomorphism transformation, the reference
\cite{Xiang1} extended the Filon-type method to the efficient
computation of the integrals $\int_0^1f(x)J_m(\omega g(x))dx$ with
the exotic oscillator $g(x)$ satisfying that for $r\geq0$, and
$g(0)= g'(0)= \cdots = g^{(r)}(0) = 0, g^{(r+1)}(0)\neq0,
g'(x)\neq0$ for $x \in(0,1]$, where $Re(m) > {1}/({r+1})$. In many
situations the accuracy of the Filon-type method proposed in
\cite{Xiang1} is significantly higher than that of other methods. As
a matter of fact, it requires the solution of a linear system that
becomes more ill-conditioned as the number of interpolation nodes
increases, and one has to adopt higher-order digit arithmetic to get
the required accuracy. Furthermore, to avoid the Runge phenomenon,
the \emph{Clenshaw-Curtis-Filon-type method} \cite{Xiang2} based on
Clenshaw-Curtis points is designed for computing Bessel transform
$\int_a^bf(x)J_m(\omega x)dx$ without singularity. Here, it should
be also mentioned that the homotopy perturbation method in
\cite{Chen3,Chen4} was presented to compute $\int_a^bf(x)J_m(\omega
x)dx$. Recently, Chen \cite{Chen1}\cite{Chen2} also proposed two
different complex integration methods for approximating
$\int_a^bf(x)J_m(\omega x)dx$ if $0 \not\in[a, b]$. To the best of
our knowledge, so far little research has been done on the numerical
computation of the integrals (1.1) and (1.2) with an algebraic or
logarithmic singularity.

Consequently, our aim is to demonstrate high efficiency of the
proposed quadrature rules for such integrals (1.1) and (1.2) by
constructing error bounds. In the next section, we propose the
Clenshaw-Curtis-Filon methods for computing the integrals (1.1) and
(1.2). Here, the required modified moments can be efficiently
calculated by a recurrence relation. Section 3 sets up new and
computational sharp error bounds of these quadrature rules by theory
analysis. In Section 4, we design a higher order method and derive
its error estimate in inverse power of $\omega$. From these new
error bounds, it can be seen that for fixed $\omega$, the error
bounds are optimal on $N$, while for fixed $N$ the error bounds are
optimal on $\omega$. Moreover, for fixed $N$, the larger the values
of $\omega$, the higher the accuracy.

\section{\textbf{Clenshaw-Curtis-Filon methods for computing (1.1) and (1.2)}}

Chebyshev interpolation has precisely the same effect as taking
partial sum of an approximation Chebyshev series expansion
\cite{Mason}. Suppose that $f(x)$ is absolutely continuous on
$[0,b]$. Let $P_Nf(x)$ denote an interpolant of $f(x)$ of degree $N$
in the Clenshaw-Curtis points
\begin{equation}\label{2.2}
x_k=\frac{b}{2}+\frac{b}{2}\cos\left(\frac{k\pi}{N}\right),\quad
k=0,1,...,N.
\end{equation}
Then, the polynomial $P_Nf(x)$ can be expressed by (see [24, Eq.
6.27, 6.28])
\begin{equation}\label{2.3}
P_Nf(x)=\sum_{j=0}^N {''}a_jT^{**}_j(x),\ \text{where} \,\,
a_j=\frac{2}{N}\sum_{k=0}^N {''}f(x_k)T^{**}_j(x_k),
\end{equation}
where the double primes indicate that the first and last terms of
the sum are to be halved, $T_j^{**}(x)$ denotes the shifted
Chebyshev polynomial of the first kind of degree $j$ on $[0, b]$.
The coefficients ${a_j}$ can be computed efficiently by FFT
\cite{Dahlquist,Trefethen}.

The \emph{Clenshaw-Curtis-Filon} (CCF) methods for (1.1) and (1.2)
are defined, respectively, as follows,
\begin{eqnarray}
\nonumber I_1^{CCF}[f]&=&\int_0^bx^\alpha P_Nf(x)J_m(\omega
x)dx\\
\label{2.4}&=&b^{\alpha+1}\sum_{j=0}^N{''}a_jM_j,
\end{eqnarray}
and
\begin{eqnarray}
\nonumber I_2^{CCF}[f]&=&\int_0^bx^\alpha \ln(x)P_Nf(x)J_m(\omega x)dx\\
\label{2.5}&=&b^{\alpha+1}\sum_{j=0}^N{''}a_j[\ln(b)M_j+\widetilde{M}_j],
\end{eqnarray}
where, for $r=b\omega,$
\begin{eqnarray}
\label{2.6} M_j&=&\int_0^1x^\alpha T^*_j(x)J_m(r x)dx,\\
\label{2.7}\widetilde{M}_j&=&\int_0^1x^\alpha \ln(x)T^*_j(x)J_m(r
x)dx,
\end{eqnarray}
are called the modified moments, where $T_j^{*}(x)$ denotes the
shifted Chebyshev polynomial on $[0, 1]$, and which can be computed
efficiently, as described below.

\textbf{Fast computations of the modified moments:}

\indent{ }\normalsize The homogeneous recurrence relation of the
modified moments $M_j$, was provided by Piessens \cite{Kythe}
\cite{Piessens}, as follows:
\begin{eqnarray}\label{2.8}
&&\nonumber \frac{r^2}{16}M_{j+4}+[(j+3)(j+3+2\alpha)+\alpha^2-m^2-\frac{r^2}{4}]M_{j+2}\\
&&\nonumber+[4(m^2-\alpha^2)-2(j+2)(2\alpha-1)]M_{j+1}\\
&&\nonumber-[2(j^2-4)+6(m^2-\alpha^2)-2(2\alpha-1)-\frac{3r^2}{8}]M_{j}\\
&&\nonumber+[4(m^2-\alpha^2)-2(j-2)(2\alpha-1)]M_{j-1}\\
&&+[(j-3)(j-3-2\alpha)+\alpha^2-m^2-\frac{r^2}{4}]M_{j-2}+\frac{r^2}{16}M_{j-4}=0,
\end{eqnarray}
It is worth to notice that
$$\frac{\partial}{\partial\alpha}M_j=\widetilde{M}_j.$$
Therefore, by differentiating the above recurrence relation
\eqref{2.8} with respect to $\alpha$, we find $\widetilde{M}_j$
satisfying the following recurrence relation:
\begin{eqnarray}\label{2.9}
&&\nonumber \frac{r^2}{16}\widetilde{M}_{j+4}+[(j+3)(j+3+2\alpha)+\alpha^2-m^2-\frac{r^2}{4}]\widetilde{M}_{j+2}\\
&&\nonumber+[4(m^2-\alpha^2)-2(j+2)(2\alpha-1)]\widetilde{M}_{j+1}\\
&&\nonumber-[2(j^2-4)+6(m^2-\alpha^2)-2(2\alpha-1)-\frac{3r^2}{8}]\widetilde{M}_{j}\\
&&\nonumber+[4(m^2-\alpha^2)-2(j-2)(2\alpha-1)]\widetilde{M}_{j-1}\\
&&\nonumber+[(j-3)(j-3-2\alpha)+\alpha^2-m^2-\frac{r^2}{4}]\widetilde{M}_{j-2}+\frac{r^2}{16}\widetilde{M}_{j-4}\\
&&\nonumber=-2(\alpha+j+3)M_{j+2}+4(2\alpha+j+2)M_{j+1}+4(3\alpha+1)M_{j}\\
&&\ \ \ +4(2\alpha-j+2)M_{j-1}+2(j-\alpha-3)M_{j-2}.
\end{eqnarray}

Because of the symmetry of the recurrence relation of the Chebyshev
polynomials $T_j(x)$, it is convenient to get $T_{-j}(x)=T_j(x),
j=1,2,...,$ and, consequently $T^*_{-j}(x)=T^*_j(x),$ $M_{-j}=M_j$
and $\widetilde{M}_{-j}=\widetilde{M}_j.$ It can be verified easily
that both \eqref{2.8} and \eqref{2.9} are valid, not only for
$j\geq5$, but for all integers of $j$. Unfortunately, for
\eqref{2.8} and \eqref{2.9} both the forward recursion and the
backward recursion are asymptotically unstable
\cite{Kythe,Piessens}. Nevertheless, in practical applications the
instability is less pronounced if $\omega\geq 2j$. Practical
experiments demonstrate that $M_j$ and $\widetilde{M}_{j}$ can be
computed accurately using the forward recursion as long as
$\omega\geq 2j$. But for $\omega<2j$ the loss of significant figures
increases and recursion in the forward direction is no longer
applicable. In this case Lozier's algorithm \cite{Lozier} or
Oliver's algorithm \cite{Oliver} has to be used. This means that
both \eqref{2.8} and \eqref{2.9} have to be solved as a boundary
value problem with six initial values and two end values. The
solution of this boundary value problem requires the solution of a
linear system of equations having a band structure. The end value
can be estimated by the asymptotic expansions in \cite{Kythe} or can
be set equal to zero. The Lozier's algorithm incorporates a
numerical test for determining the optimum location of the endpoint,
when the end value is set to be zero. The advantage is that a
user-required accuracy is automatically obtained, without
computation of the asymptotic expansion. For details one can refer
to \cite{Kythe,Lozier,Oliver,Piessens}.
To start the recurrence relation with $k=0,1,2,3,\ldots$, we only
need $M_0, M_1, M_2$, and $M_3$. By plugging the shifted Chebyshev
polynomials $T^*_0(x)=1$, $T^*_1(x)=2x-1$, $T^*_2(x)=8x^2-8x+1$ and
$T^*_3(x)=32x^3-48x^2+18x-1$ on $[0,1]$ into \eqref{2.6}, we obtain
\begin{eqnarray}
\nonumber M_0&=&G(\omega,m,\alpha),\\
  \nonumber M_1&=&2G(\omega,m,\alpha+1)-M_0,\\
  \nonumber  M_2&=&8G(\omega,m,\alpha+2)-4M_1-3M_0,\\
\nonumber  M_3&=&32G(\omega,m,\alpha+3)-6M_2-15M_1-10M_0.
\end{eqnarray}
Then, it is apparent from the above equalities that \begin{eqnarray}
\label{2.111} \widetilde{M}_0&=&\frac{\partial}{\partial\alpha}G(\omega,m,\alpha),\\
\widetilde{M}_1&=&2\frac{\partial}{\partial\alpha}G(\omega,m,\alpha+1)-\widetilde{M}_0,\\
\widetilde{M}_2&=&8\frac{\partial}{\partial\alpha}G(\omega,m,\alpha+2)-4\widetilde{M}_1-3\widetilde{M}_0,\\
\label{2.114}\widetilde{M}_3&=&32\frac{\partial}{\partial\alpha}G(\omega,m,\alpha+3)-6\widetilde{M}_2-15\widetilde{M}_1-10\widetilde{M}_0,
\end{eqnarray}
where, from [1, p.480], [15, p.676] and [23, p.44], we find several
moments formulae as follows, for $\Re(m+\alpha)>-1,$
\begin{eqnarray}
\nonumber G(\omega,m,\alpha)&=&\int_0^1x^\alpha J_m(r x)dx\\
\label{2.18}&=&\frac{2^\alpha\Gamma(\frac{m+\alpha+1}{2})}{r^{\alpha+1}\Gamma(\frac{m-\alpha+1}{2})}+\frac{1}{r^\alpha}[(\alpha+m-1)J_m(r)S_{\alpha-1,m-1}(r)-J_{m-1}(r)S_{\alpha,m}(r)],\ \ \ \ \ \\
\label{2.19} G(\omega,m,\alpha)
&=&\frac{r^m}{2^m(\alpha+m+1)\Gamma(m+1)}{_1F_2}(\frac{\alpha+m+1}{2};\frac{\alpha+m+3}{2},m+1;-\frac{r^2}{4}),\\
\label{2.20} G(\omega,m,\alpha)
&=&\frac{\Gamma(\frac{m+\alpha+1}{2})}{r\Gamma(\frac{m-\alpha+1}{2})}\sum_{j=0}^\infty\frac{(m+2j+1)\Gamma(\frac{m-\alpha+1}{2}+j)}{\Gamma(\frac{m+\alpha+3}{2}+j)}J_{m+2j+1}(r),
\end{eqnarray}
where $S_{\mu,\nu}(z),\Gamma(z),{_1F_2}(\mu;\nu,\lambda;z)$ denote a
Lommel function of the second kind, the gamma function, a class of
generalized hypergeometric function, respectively. Moreover,
${_1F_2}(\mu;\nu,\lambda;z)$ converges for all $|z|$. From [32,
p.346], $S_{\mu,\nu}(z)$ can be expressed in terms of
${_1F_2}(\mu;\nu,\lambda;z)$, namely,
\begin{eqnarray}\label{2.21}
\nonumber
S_{\mu,\nu}(z)&=&\frac{z^{\mu+1}}{(\mu+\nu+1)(\mu-\nu+1)}{_1F_2}(1;\frac{\mu-\nu+3}{2},\frac{\mu+\nu+3}{2};-\frac{z^2}{4})\\
&&-\frac{2^{\mu-1}\Gamma(\frac{\mu+\nu+1}{2})}{\pi\Gamma(\frac{\nu-\mu}{2})}(J_\nu(z)-\cos(\pi(\mu-\nu)/2)Y_\nu(z)),
\end{eqnarray}
where $Y_\nu(z)$ is a Bessel function of the second kind of order
$\nu$. The right-hand sides of (2.11-2.14) involve the derivatives
of the generalized hypergeometric function with respect to the
parameter $\alpha$, which have been shown in \cite{Kang3}. The
required derivatives of the gamma function are also described in
terms of the Psi (Digamma) function $\psi_0(z)$, such as
\cite{Abram}
\begin{eqnarray}\label{2.221}\Gamma'(z)=\Gamma(z)\psi_0(z).\end{eqnarray}

The efficient implementation of the modified moments is based on the
fast computation of the Lommel functions $S_{\mu,\nu}(z)$ and the
hypergeometric function ${_1F_2}(\mu;\nu,\lambda;z)$. Excellent
references in this area are \cite{GTS,Watson}. Obviously, when
programming the proposed algorithm in a language like Matlab, we can
calculate the values of $\Gamma(z), J_m(z)$ and
${_1F_2}(\mu;\nu,\lambda;z)$ by invoking the biult-in functions `
gamma$(z)$', ` besselj$(m,z)$' and calling mfun(` hypergeom',
$[\mu], [\nu,\lambda ], z$) from Maple, respectively.

\textbf{The computation of} $S_{\mu,\nu}(z)$:

(1) For large $|z|$ and $|\arg z|<\pi,$ we can calculate efficiently
$S_{\mu,\nu}(z)$ by truncating the following asymptotic expansion
(see \cite[pp. 351-352]{Watson}) in inverse powers of $z$:
\begin{eqnarray}
\nonumber S_{\mu,\nu}(z)&=&z^{\mu-1}\left\{1-\frac{(\mu-1)^2-\nu^2}{z^2}+\frac{[(\mu-1)^2-\nu^2][(\mu-3)^2-\nu^2]}{z^4}-\ldots\right.\\
\nonumber&&\left.+(-1)^p\frac{[(\mu-1)^2-\nu^2]\ldots[(\mu-2p+1)^2-\nu^2]}{z^{2p}}\right\}+O(z^{\mu-2p-2}),
\end{eqnarray}

(2) For small $|z|$, we prefer to compute $S_{\mu,\nu}(z)$ using
\eqref{2.21}.

So, when $r=b\omega$ is large, such as $r\geq50,$ we prefer to
compute the moments using \eqref{2.18}. when $r=b\omega$ is small,
for example $r<50,$ the moments \eqref{2.20} are available. This may
be due to the property that $J_m(r)$ is a fast decreasing function
of $m$ when $m>r$. Practical experiments also demonstrate that
$J_m(r)$ can decrease to zero quite rapidly when $m$ is a little
larger than $r$. Fortunately, the moments \eqref{2.19} is available
for all $r$.

\section{Error bounds of the CCF methods \eqref{2.4} and \eqref{2.5}}

To obtain results that are absolutely reliable for numerical
computations, it is necessary to construct a upper bound for the
corresponding error. In the following, we will consider new and
computational error bounds. These new error bounds share that for
fixed $N$, the error bounds are optimal on $\omega$, while for fixed
$\omega$ the error bounds are optimal on $N$.

In the following, in order to derive these new error bounds in
inverse powers of $\omega$, we first give Lemmas 3.1 and 3.2.

\begin{lemma}
For every $t\in[0,b]$ $(b>0)$ and $\alpha>-1$, it is true that, for
$\omega\geq1$,
\begin{eqnarray}
\label{3.23}\int_0^tx^\alpha J_m(\omega x)dx&=&{\displaystyle\left\{
\begin{array}{ll}O(\frac{1}{\omega^{\alpha+1}}), & \textrm{if}\ -1<\alpha<0,
\\O(\frac{1}{\omega}), & \textrm{if}\ \
\alpha\geq0,\end{array}\right.}\\
\label{3.24}\int_0^tx^\alpha \ln (x) J_m(\omega
x)dx&=&{\displaystyle\left\{
\begin{array}{ll}O(\frac{1+\ln(\omega)}{\omega^{\alpha+1}}), & \textrm{if}\ -1<\alpha\leq0,
\\O(\frac{1}{\omega}), & \textrm{if}\ \
\alpha>0.\end{array}\right.}
\end{eqnarray}
\end{lemma}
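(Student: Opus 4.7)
The plan is to treat the two regimes $\alpha \geq 0$ and $-1 < \alpha < 0$ (respectively $\alpha \leq 0$) by rather different mechanisms. Throughout I will use the primitive
$$F(y):=\int_0^y J_m(s)\,ds,\qquad G(x):=\int_0^x J_m(\omega s)\,ds=\frac{F(\omega x)}{\omega}.$$
The first ingredient is that $|F(y)|\leq C$ uniformly in $y\geq 0$, which follows from the Hankel asymptotic $J_m(s)=\sqrt{2/(\pi s)}\cos(s-m\pi/2-\pi/4)+O(s^{-3/2})$ together with one integration by parts of the cosine factor; the contribution near $0$ is trivially integrable since $J_m(s)=O(s^m)$ with $m>-1$. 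Consequently $|G(x)|\leq C/\omega$ for every $x\geq 0$.

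For the non-singular branches (that is, $\alpha\geq 0$ in \eqref{3.23} and $\alpha>0$ in \eqref{3.24}) I would integrate by parts directly on $[0,t]$ with $dv=J_m(\omega x)\,dx$. For \eqref{3.23} this yields
$$\int_0^t x^\alpha J_m(\omega x)\,dx = t^\alpha G(t)-\alpha\int_0^t x^{\alpha-1}G(x)\,dx,$$
whose absolute value is at most $2C t^\alpha/\omega=O(1/\omega)$; the boundary term at $0$ vanishes because $G(0)=0$. For \eqref{3.24} with $\alpha>0$ I set $g(x)=x^\alpha\ln x$ and observe that $g(0^+)=0$ and $\int_0^t|g'(x)|\,dx<\infty$, so the same one-step IBP gives $O(1/\omega)$.

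For the singular branches I would rescale via $u=\omega x$, obtaining
$$\int_0^t x^\alpha J_m(\omega x)\,dx=\omega^{-\alpha-1}\int_0^{\omega t}u^\alpha J_m(u)\,du,$$
and, splitting $\ln x=\ln u-\ln\omega$,
$$\int_0^t x^\alpha\ln(x)J_m(\omega x)\,dx=\omega^{-\alpha-1}\!\!\int_0^{\omega t}u^\alpha\ln(u)J_m(u)\,du-\frac{\ln\omega}{\omega^{\alpha+1}}\!\!\int_0^{\omega t}u^\alpha J_m(u)\,du.$$
The claim then reduces to showing that the two ``master'' integrals $\int_0^{\omega t} u^\alpha J_m(u)\,du$ and $\int_0^{\omega t} u^\alpha\ln(u)J_m(u)\,du$ remain bounded uniformly in $\omega$ and $t$. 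Near $u=0$ this follows from $u^\alpha J_m(u)=O(u^{\alpha+m})$ with an extra harmless $|\ln u|$ in the second case. Near $u=\infty$ I would insert the Hankel expansion for $J_m$ and integrate the oscillatory factor by parts once; the remaining tail is absolutely convergent exactly when $\alpha<1/2$, which is satisfied in both singular regimes (and $|\ln u|\cdot u^{\alpha-3/2}$ is still integrable).

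The main technical obstacle is precisely the uniform-in-$(\omega t)$ control of these master integrals, since the decay $u^{\alpha-1/2}$ of the integrand is only marginal and the logarithm in the second integral further slows the oscillatory cancellation. A clean shortcut, if needed, is to invoke the convergence of the Weber--Schafheitlin integral $\int_0^\infty u^\alpha J_m(u)\,du$ for $-m-1<\alpha<1/2$, which immediately yields the uniform bound for the first master integral and, after differentiation in $\alpha$, for the logarithmic variant as well; collecting the two pieces then gives the announced $O(\omega^{-\alpha-1})$ and $O((1+\ln\omega)/\omega^{\alpha+1})$ rates.
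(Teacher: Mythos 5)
Your argument is correct, and it overlaps with the paper's proof on the singular branches while diverging on the non-singular ones. For $-1<\alpha<0$ (and $-1<\alpha\le 0$ in the logarithmic case) the paper does exactly what you do: substitute $y=\omega x$, split $\ln x=\ln y-\ln\omega$, and invoke convergence of the resulting improper integrals (the paper cites Dirichlet's/Cauchy's test where you cite the Hankel expansion plus the Weber--Schafheitlin integral; both routes hinge on the same fact that $\int_0^{Y}u^{\alpha}J_m(u)\,du$ and $\int_0^{Y}u^{\alpha}\ln(u)J_m(u)\,du$ are bounded uniformly in $Y$, and both share the implicit standing hypothesis $\Re(m+\alpha)>-1$ for integrability at the origin). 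For $\alpha\ge 0$, however, the paper proceeds differently: it treats $\alpha=0$ as a separate case, using $\int_0^{\infty}J_m(y)\,dy=1$ for \eqref{3.23} and, for the logarithmic integral, a two-case splitting at $\omega t=1$ combined with $|J_m|\le 1$ and the mean value theorem for integrals to extract the $(1+\ln\omega)/\omega$ rate; for $\alpha>0$ it integrates by parts via the Bessel recurrence $d[x^{m+1}J_{m+1}(\omega x)]=\omega x^{m+1}J_m(\omega x)\,dx$ and reduces to the already-proved cases. Your single integration by parts against the bounded primitive $G(x)=\omega^{-1}\int_0^{\omega x}J_m(s)\,ds$ is cleaner and more uniform: it handles $\alpha=0$ and $\alpha>0$ in one stroke, avoids the recurrence identity, and it also lets you absorb the delicate $\alpha=0$ logarithmic case into the rescaling argument (legitimately, since $\int_0^{\infty}\ln(u)J_m(u)\,du$ converges). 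The only point worth tightening is your remark about obtaining the logarithmic master integral by ``differentiation in $\alpha$'' of a conditionally convergent integral; this needs justification, but your direct Hankel-plus-one-IBP argument already covers it, so nothing essential is missing.
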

\begin{proof} We divide our proof in three steps.

(1) For $-1<\alpha<0,$ setting $y=\omega x$ yields that
\begin{eqnarray}
\nonumber \int_0^tx^\alpha J_m(\omega
x)dx&=&\frac{1}{\omega^{\alpha+1}}\int_0^{\omega t}y^\alpha
J_m(y)dy,\\
\nonumber \int_0^tx^\alpha \ln (x) J_m(\omega
x)dx&=&\frac{1}{\omega^{\alpha+1}}\left[\int_0^{\omega t}y^\alpha
\ln(y) J_m(y)dy-\ln(\omega)\int_0^{\omega t}y^\alpha
J_m(y)dy\right].
\end{eqnarray}
Obviously, whether the integral upper limit $\omega t$ in the
right-side of the above two formulae is finite or not, by
convergence tests for improper integrals (\emph{Cauchy's test} or
\emph{Dirichelet's test}), we know that the resulting defect or
infinite integrals are convergent. It leads to the first identities
in \eqref{3.23} and \eqref{3.24}.

(2) For $\alpha=0,$ combining $\int_0^\infty J_m(t)dt=1$
\cite[p.486]{Abram} and the moments formula \eqref{2.18}, we have
$$ \int_0^t J_m(\omega
x)dx=\frac{1}{\omega}\int_0^{\omega
t}J_m(y)dy=O\bigg(\frac{1}{\omega}\bigg).$$

If $0<\omega t\leq1,$ from \cite[p.362]{Abram} and \cite{Frank}, we
have
\begin{eqnarray}\label{3.251}|J_m(x)|\leq1,m\geq0,x\in\Re.\end{eqnarray}
So, the first identity in \eqref{3.24} follows that
\begin{eqnarray}
\nonumber \left|\int_0^t \ln (x) J_m(\omega
x)dx\right|&=&\frac{1}{\omega}\left|\int_0^{\omega
t}\ln(\frac{y}{\omega})
J_m(y)dy\right|\\
  \nonumber&\leq&\frac{1}{\omega}\int_0^{\omega
t}|\ln(y)-\ln(\omega)||J_m(y)|dy\\
\nonumber&\leq&\frac{1}{\omega}\int_0^1 \big(-\ln(y)+\ln(\omega)
\big) dy\\
\label{3.25}&=&\frac{1+\ln(\omega)}{\omega}.
\end{eqnarray}
If $\omega t>1,$ from the proof of \eqref{3.25}, we then obtain
\begin{eqnarray}
\nonumber \left|\int_0^t \ln (x) J_m(\omega
x)dx\right|&=&\frac{1}{\omega}\left|\int_0^{\omega
t}\ln(\frac{y}{\omega})
J_m(y)dy\right|\\
  \nonumber&\leq&\frac{1}{\omega}\left|\int_0^1(\ln(y)-\ln(\omega))J_m(y)dy\right|+\frac{1}{\omega}\left|\int_1^{\omega t}(\ln(y)-\ln(\omega))J_m(y)dy\right|\\
\label{3.26}&\leq&\frac{1+\ln(\omega)}{\omega}+\frac{1}{\omega}\left|\int_1^{\omega
t}(\ln(y)-\ln(\omega))J_m(y)dy\right|.
\end{eqnarray}
Using the mean value theorem for integrals, we have
$$
\int_1^{\omega t}(\ln(y)-\ln(\omega))J_m(y)dy=-\ln(\omega)\int_1^\xi
J_m(y)dy+\ln(t)\int_\xi^{\omega t} J_m(y)dy,\ \text{for}\ 1\leq
\xi\leq\omega t.
$$
Then, it follows that
\begin{eqnarray}
\nonumber \left|\int_1^{\omega
t}(\ln(y)-\ln(\omega))J_m(y)dy\right|&\leq&\ln(\omega)\left|\int_1^\xi
J_m(y)dy\right|+|\ln(t)|\left|\int_\xi^{\omega t} J_m(y)dy\right|\\
\label{3.27}&=&O(1+\ln(\omega)),
\end{eqnarray}
which is due to the fact that both $\int_1^\xi J_m(y)dy$ and
$\int_\xi^{\omega t} J_m(y)dy$ converge by referring to the identity
$\int_0^\infty J_m(t)dt=1$ and the moments formula \eqref{2.18}.
Thus, combining \eqref{3.26} and \eqref{3.27} yields the first
identity in \eqref{3.24}.

(3) For $\alpha>0$, by integrating by parts and noting the
differential relation [1,pp.361,439]
\begin{eqnarray}\label{3.281}d[x^{m+1}J_{m+1}(\omega x)]=\omega x^{m+1}J_m(\omega x)dx,\end{eqnarray} and together with the first identities in \eqref{3.23} and \eqref{3.24},  we
find
\begin{eqnarray}
\nonumber \int_0^{t}x^\alpha J_m(\omega
x)dx&=&\frac{1}{\omega}\int_0^t
x^{\alpha-m-1}d[x^{m+1}J_{m+1}(\omega x)]\\
\nonumber&=&\frac{1}{\omega}\left[x^\alpha J_{m+1}(\omega
x)\big|_0^t-(\alpha-m-1)\int_0^t x^{\alpha-1}J_{m+1}(\omega
x)dx\right]\\
\nonumber&=&O\bigg(\frac{1}{\omega}\bigg).
\end{eqnarray}
Similarly, we obtain $$\int_0^{t}x^\alpha \ln(x) J_m(\omega
x)dx=O\bigg(\frac{1}{\omega}\bigg).$$ This completes the proof.
\end{proof}

From Lemma 3.1, we prove Lemma 3.2.
\begin{lemma}
For $f\in C[0,b], \alpha>-1$ and $\omega\geq1$, it is true that,
\begin{eqnarray}
\label{3.28}\left|\int_0^bt^\alpha f(t)J_m(\omega
t)dt\right|&\leq&C_1(\omega)(|f(b)|+\int_0^b|f'(t)|dt),\\
\label{3.29}\left|\int_0^bt^\alpha \ln(t)f(t)J_m(\omega
t)dt\right|&\leq&C_2(\omega)(|f(b)|+\int_0^b|f'(t)|dt),
\end{eqnarray}
where
\begin{eqnarray}
\nonumber C_1(\omega)&=&{\displaystyle\left\{
\begin{array}{ll}\frac{c_1}{\omega^{\alpha+1}}, & \textrm{if}\ -1<\alpha<0,
\\\frac{c_2}{\omega}, & \textrm{if}\ \
\alpha\geq0,\end{array}\right.}\\
\nonumber C_2(\omega)&=&{\displaystyle\left\{
\begin{array}{ll}\frac{c_3(1+\ln(\omega))}{\omega^{\alpha+1}}, & \textrm{if}\ -1<\alpha<0,
\\\frac{c_4}{\omega}, & \textrm{if}\ \
\alpha\geq0,\end{array}\right.}
\end{eqnarray}
and $c_k(k=1,2,3,4)$ are four constants independent of $\omega$ and
$f$.
\end{lemma}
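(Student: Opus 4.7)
The natural strategy is to reduce Lemma 3.2 to Lemma 3.1 via an integration-by-parts against the oscillatory weight. Introduce the two antiderivatives
\begin{equation*}
F(t)=\int_0^t x^{\alpha}J_m(\omega x)\,dx,\qquad \widetilde{F}(t)=\int_0^t x^{\alpha}\ln(x)J_m(\omega x)\,dx,
\end{equation*}
so that $F(0)=\widetilde{F}(0)=0$ and $F'(t)=t^{\alpha}J_m(\omega t)$, $\widetilde{F}'(t)=t^{\alpha}\ln(t)J_m(\omega t)$ a.e. Since $f\in C[0,b]$ with $\int_0^b|f'(t)|\,dt<\infty$, $f$ is absolutely continuous, and integration by parts gives
\begin{equation*}
\int_0^b t^{\alpha}f(t)J_m(\omega t)\,dt = f(b)F(b)-\int_0^b F(t)f'(t)\,dt,
\end{equation*}
with the analogous identity for $\widetilde{F}$.

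The bound then follows immediately once we establish the uniform estimates $|F(t)|\le C_1(\omega)$ and $|\widetilde{F}(t)|\le C_2(\omega)$ for all $t\in[0,b]$, because then
\begin{equation*}
\Bigl|\int_0^b t^{\alpha}f(t)J_m(\omega t)\,dt\Bigr|\le |f(b)|\,|F(b)|+\int_0^b|F(t)|\,|f'(t)|\,dt\le C_1(\omega)\Bigl(|f(b)|+\int_0^b|f'(t)|\,dt\Bigr),
\end{equation*}
and similarly for the logarithmic integral. Thus the plan reduces to verifying that the big-$O$ bounds in Lemma 3.1 are uniform in the upper limit $t\in[0,b]$, not merely pointwise.

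For $-1<\alpha<0$, the substitution $y=\omega x$ in the proof of Lemma 3.1 turns $F(t)$ into $\omega^{-\alpha-1}\int_0^{\omega t}y^{\alpha}J_m(y)\,dy$; since $y^{\alpha}J_m(y)$ is integrable near $0$ and the improper integral $\int_0^{\infty}y^{\alpha}J_m(y)\,dy$ converges by Dirichlet's test, the partial integrals are uniformly bounded in the upper limit $\omega t$, giving a constant independent of $t$. The case $\alpha\ge 0$ is obtained by the same integration-by-parts using $d[x^{m+1}J_{m+1}(\omega x)]=\omega x^{m+1}J_m(\omega x)\,dx$ as in step (3) of Lemma 3.1, where the boundary term is controlled by $b^{\alpha}$ and the recursive integral is uniformly bounded by what we just proved (possibly after a finite number of such reductions). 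The logarithmic case for $\widetilde{F}$ proceeds identically, importing the bounds \eqref{3.25}, \eqref{3.26}, \eqref{3.27} from the proof of Lemma 3.1.

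The main obstacle is this uniformity verification for the logarithmic integral when $\omega t>1$: the mean-value-theorem step \eqref{3.27} produces a term $|\ln(t)|\bigl|\int_\xi^{\omega t}J_m(y)\,dy\bigr|$ whose prefactor appears $t$-dependent. However, for $t\in(1/\omega,b]$ one has $|\ln(t)|\le\max(\ln\omega,|\ln b|)=O(1+\ln\omega)$, absorbing the $t$-dependence into the constant, while for $t\le 1/\omega$ the bound already follows from the short-interval estimate \eqref{3.25}. Once this is settled, the constants $c_1,c_2,c_3,c_4$ of the lemma are produced explicitly by tracing through the bounds on $|F|$ and $|\widetilde{F}|$ in the two ranges of $\alpha$.
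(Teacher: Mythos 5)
Your proposal is correct and takes essentially the same route as the paper: define the antiderivatives $F(t)=\int_0^t x^{\alpha}J_m(\omega x)\,dx$ and $\widetilde{F}(t)$, integrate by parts against $f$, and invoke $\|F\|_\infty\le C_1(\omega)$, $\|\widetilde F\|_\infty\le C_2(\omega)$ from Lemma 3.1. Your extra care in checking that the $O(\cdot)$ bounds of Lemma 3.1 are uniform in the upper limit $t\in[0,b]$ (including the $|\ln(t)|$ prefactor in the logarithmic case) addresses a point the paper simply takes for granted, but it does not change the method.
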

\begin{proof} Setting $F(t)=\int_0^tx^\alpha J_m(\omega t)dt,t\in[0,b]$,
together with Lemma 3.1, we then have
\begin{eqnarray}
\nonumber C_1(\omega)=||F(t)||_\infty={\displaystyle\left\{
\begin{array}{ll}\frac{c_1}{\omega^{\alpha+1}}, & \textrm{if}\ -1<\alpha<0,
\\\frac{c_2}{\omega}, & \textrm{if}\ \
\alpha\geq0,\end{array}\right.}
\end{eqnarray}
These together implies that, by integrating by parts,
\begin{eqnarray}
\nonumber \left|\int_0^bt^\alpha f(t)J_m(\omega r
)dt\right|&=&\left|\int_0^b f(t)dF(t)\right|\\
\nonumber&=&\left|f(t)F(t)\big|_0^b-\int_0^b F(t)f'(t)dt\right|\\
\nonumber&\leq&|f(b)||F(b)|+\int_0^b |F(t)||f'(t)|dt\\
\nonumber&\leq&|f(b)|||F(t)||_\infty+\int_0^b |f'(t)|dt||F(t)||_\infty\\
\nonumber&=&C_1(\omega)(|f(b)|+\int_0^b|f'(t)|dt).
\end{eqnarray}
It is now obvious that the assertion \eqref{3.28} holds. The proof
of \eqref{3.29} can be completed by the method analogous to that
used above.
\end{proof}

Meanwhile, it should also be noted that the following Lemma 3.3 also
plays an important role in the construction of error bounds.
\begin{lemma} (see \cite{Xiang2}) Let $n$ be a nonnegative integer.
If $f$ is analytic with $|f(z)|\leq M$ in the region
$\mathscr{E}_{\rho}$ bounded by the ellipse with foci $\pm 1$ and
major and minor semi-axes whose lengths sum to $\rho>1,$ then for
$x\in[-1,1]$,
\begin{equation}
\|f^{(n)}(x)-P_{N}^{(n)}f(x)\|_\infty\leq
\frac{2M(N+1)^{2n}}{({\rho}^N-{\rho}^{-N})(2n-1)!!}\sum_{j=0}^n
\biggl(\frac{2\rho}{(\rho-1)^2}\biggr)^{n+1-j},
\end{equation}
where $(2n-1)!!=1\cdot3\cdot5 \cdots (2n-1)$ and $(-1)!!=1$.
\end{lemma}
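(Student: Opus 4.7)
The plan is to derive the bound from the Chebyshev series of $f$ on $[-1,1]$. Since $f$ is analytic and satisfies $|f|\le M$ on $\mathscr{E}_\rho$, Bernstein's contour-integral estimate for Chebyshev coefficients yields $f(x)=\sum_{k=0}^\infty a_k T_k(x)$ with the sharp bound $|a_k|\le 2M/(\rho^k-\rho^{-k})$. The denominator here already isolates the factor $1/(\rho^N-\rho^{-N})$ appearing in the claim. Because the Lobatto nodes $\cos(\ell\pi/N)$ satisfy the aliasing identity $T_{2Nj\pm k}(\cos(\ell\pi/N))=T_k(\cos(\ell\pi/N))$, the interpolation error at a generic $x\in[-1,1]$ admits the representation $f(x)-P_Nf(x)=\sum_{k=N+1}^\infty a_k\bigl(T_k(x)-T_{k^\ast}(x)\bigr)$, where $k^\ast\le N$ is the alias of $k$ modulo $2N$. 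Uniform convergence of the tail, guaranteed by the exponential decay of $|a_k|$, permits term-by-term differentiation $n$ times.

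The second ingredient is the sharp bound $\max_{x\in[-1,1]}|T_k^{(n)}(x)|=T_k^{(n)}(1)= k^2(k^2-1^2)(k^2-2^2)\cdots(k^2-(n-1)^2)/(2n-1)!!$, valid for $k\ge n$ (the derivative vanishes for smaller $k$). Since $k^\ast\le k$, the derivative bound for $T_{k^\ast}$ is dominated by the one for $T_k$. Substituting both bounds produces an estimate of the form $\|f^{(n)}-P_N^{(n)}f\|_\infty \le \dfrac{4M}{(2n-1)!!}\sum_{k=N+1}^\infty \dfrac{k^2(k^2-1)\cdots(k^2-(n-1)^2)}{\rho^k-\rho^{-k}}$, so the whole task reduces to bounding this tail in the claimed closed form.

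To produce that closed form, I would expand the polynomial factor $k^2(k^2-1^2)\cdots(k^2-(n-1)^2)$ as a combination of monomials in $k^2$ and then evaluate each series $\sum_{k\ge N+1}k^{2j}\rho^{-k}$ as a suitable $\rho$-derivative of the basic geometric tail $\rho^{-(N+1)}/(1-\rho^{-1})$. Differentiating the tail once with respect to $\log\rho$ brings in the multiplicative factor $\rho/(\rho-1)^2$; repeating this up to $n$ times, and using that the polynomial weight caps the leading behaviour at $k=N+1$, extracts the overall factor $(N+1)^{2n}$. The coefficient denominator $\rho^k-\rho^{-k}$ contributes $1/(\rho^N-\rho^{-N})$, and collecting the $n+1$ intermediate contributions assembles precisely into the sum $\sum_{j=0}^n\bigl(2\rho/(\rho-1)^2\bigr)^{n+1-j}$.

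The main obstacle, I expect, is the bookkeeping in this last step: tracing through the $n$ differentiations and verifying that the $n+1$ terms combine into a clean geometric sum in $2\rho/(\rho-1)^2$ rather than a messier expression. A cleaner alternative is induction on $n$: the base case $n=0$ is the classical Bernstein estimate, and the inductive step uses a Markov-type derivative inequality on $[-1,1]$ to propagate the bound for $f^{(n-1)}-P_N^{(n-1)}f$ to that for $f^{(n)}-P_N^{(n)}f$, each inductive step contributing a factor $N^2$ (hence the overall $(N+1)^{2n}$), the double-factorial adjustment, and one further term of the geometric progression in $2\rho/(\rho-1)^2$. I would carry out both routes and present whichever gives the cleaner identification of the constants.
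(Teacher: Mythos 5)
First, a point of comparison: the paper does not prove this lemma at all --- it is imported verbatim from \cite{Xiang2} --- so there is no in-paper argument to measure you against. Your primary route (Chebyshev expansion with the Bernstein-type coefficient bound $|a_k|\le 2M/(\rho^k-\rho^{-k})$, the aliasing representation of the Clenshaw--Curtis interpolation error, term-by-term differentiation of the tail, and the endpoint bound $\max_{x\in[-1,1]}|T_k^{(n)}(x)|=T_k^{(n)}(1)=\prod_{i=0}^{n-1}(k^2-i^2)/(2n-1)!!$) is exactly the strategy of the cited source, and each of those ingredients is correct.

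There is, however, a genuine gap: the entire content of the lemma is the closed-form estimate of the tail $\sum_{k=N+1}^{\infty}\prod_{i=0}^{n-1}(k^2-i^2)/(\rho^k-\rho^{-k})$ in terms of $(N+1)^{2n}$, $1/(\rho^N-\rho^{-N})$ and the geometric sum in $2\rho/(\rho-1)^2$, and this is precisely the step you defer (``I expect'', ``I would carry out both routes''). As sketched it is not yet proof-shaped: one must first extract the factor $1/(\rho^N-\rho^{-N})$ via $\rho^k-\rho^{-k}\ge\rho^{k-N}(\rho^N-\rho^{-N})$ for $k\ge N$, and then the remaining sum $\sum_{k>N}k^{2n}\rho^{-(k-N)}$ does \emph{not} have its ``leading behaviour capped at $k=N+1$'' --- the weight $k^{2n}$ grows along the tail, so one needs the binomial splitting $k^{2n}=((k-N-1)+(N+1))^{2n}$ (equivalently, repeated differentiation of the geometric series) together with a deliberate recombination to land on exactly $n+1$ powers of $2\rho/(\rho-1)^2$; the unresolved factor of two (your $4M$ from the two aliased terms versus the stated $2M$) must be absorbed in that same computation. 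More seriously, your fallback induction is unsound as described: a Markov-type inequality $\|p'\|_\infty\le N^2\|p\|_\infty$ applies to polynomials of degree at most $N$, whereas $f^{(n-1)}-P_N^{(n-1)}f$ is not a polynomial, so Markov cannot be used to propagate the bound from order $n-1$ to order $n$. That alternative route would have to be abandoned, which leaves the direct tail summation --- the part you have not carried out --- as the only viable path.
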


Based on the above Lemmas 3.1-3.3, we derive error bounds in inverse
powers of $\omega$ in the following Theorems 3.1-3.2. For $f\in
C^2[0,b],$ the error bound of the CCF methods \eqref{2.4} is shown
as follows.
\begin{theorem}Assume that $f\in C^2[0,b].$ Then the absolute error
of the CCF methods \eqref{2.4} for $\omega\geq1, \alpha>-1$ and
every fixed $N$, satisfies
\begin{eqnarray}
\label{3.31}|I_1[f]-I_1^{CCF}[f]|\leq \min{\displaystyle\left\{
\begin{array}{lll}\frac{b^{\alpha+1}}{\alpha+1}||f(x)-P_Nf(x)||_\infty,\\
bC_1(\omega)||f'(x)-P'_Nf(x)||_\infty,\\
\frac{C_1(\omega)}{\omega}[|f'(b)-P'_Nf(b)|+b(1+\frac{3}{2}|\alpha-m-1|)||f''(x)-P''_Nf(x)||_\infty].
\end{array}\right\}}\ \ \
\end{eqnarray}
\end{theorem}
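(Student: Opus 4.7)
Writing $e(x):=f(x)-P_N f(x)$, the error splits cleanly as
\begin{equation*}
I_1[f]-I_1^{CCF}[f]=\int_0^b x^\alpha e(x) J_m(\omega x)\,dx,
\end{equation*}
and each of the three bounds in \eqref{3.31} arises from a different way of estimating this integral; the claim then follows by taking the minimum. Throughout I would use the Clenshaw--Curtis endpoint conditions $e(0)=e(b)=0$, since the nodes \eqref{2.2} satisfy $x_0=b$ and $x_N=0$.

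The first bound is obtained from the uniform estimate $|J_m(\omega x)|\le 1$ of \eqref{3.251}, giving $\|e\|_\infty\int_0^b x^\alpha\,dx=b^{\alpha+1}\|e\|_\infty/(\alpha+1)$. For the second, I would apply Lemma 3.2 directly to $e$; the boundary term in \eqref{3.28} vanishes because $e(b)=0$, and $\int_0^b|e'(t)|\,dt\le b\|e'\|_\infty$ produces the factor $bC_1(\omega)\|f'-P'_Nf\|_\infty$.

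The third bound, which is the substantive step, requires trading an extra $\omega^{-1}$ for a derivative of $e$, which I would accomplish via a single integration by parts based on the Bessel identity \eqref{3.281}. Writing $J_m(\omega x)\,dx=\omega^{-1}x^{-(m+1)}\,d[x^{m+1}J_{m+1}(\omega x)]$ and integrating by parts gives
\begin{equation*}
\int_0^b x^\alpha e(x)J_m(\omega x)\,dx=\frac{1}{\omega}\bigl[x^\alpha e(x)J_{m+1}(\omega x)\bigr]_0^b-\frac{1}{\omega}\int_0^b x^{m+1}J_{m+1}(\omega x)\,\tfrac{d}{dx}\bigl[x^{\alpha-m-1}e(x)\bigr]\,dx.
\end{equation*}
The boundary contribution vanishes: at $b$ because $e(b)=0$, and at $0$ because $J_{m+1}(0)=0$ combined with $e(0)=0$ suffice for every $\alpha>-1$, $m>-1$. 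Expanding the derivative decomposes the remaining integral into a part with factor $(\alpha-m-1)x^{\alpha-1}e(x)$ and a part with factor $x^\alpha e'(x)$. To each I would apply Lemma 3.2, whose proof extends verbatim to $J_{m+1}$ with the same $C_1(\omega)$. The $e'$-piece is immediate and contributes $\frac{C_1(\omega)}{\omega}(|e'(b)|+b\|e''\|_\infty)$, accounting for the $|f'(b)-P'_Nf(b)|$ term and part of the $\|f''-P''_Nf\|_\infty$ term in \eqref{3.31}.

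The anticipated obstacle is the piece with the apparently non-integrable weight $x^{\alpha-1}$ when $\alpha<0$, for which Lemma 3.2 does not apply directly. I would resolve this by writing $x^{\alpha-1}e(x)=x^\alpha g(x)$ with $g(x):=e(x)/x$; the condition $e(0)=0$ together with $f\in C^2[0,b]$ makes $g\in C^1[0,b]$, and the integral form of Taylor's remainder yields $g'(t)=t^{-2}\int_0^t s\,e''(s)\,ds$, so that $g(b)=e(b)/b=0$ and $|g'|\le\|e''\|_\infty/2$ uniformly. Lemma 3.2 applied to $g$ then bounds this piece by a constant times $|\alpha-m-1|C_1(\omega)\omega^{-1}b\|e''\|_\infty$; recombining with the $e'$-piece and taking the minimum with the first two bounds produces \eqref{3.31}.
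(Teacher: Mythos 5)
Your proposal is correct and follows essentially the same route as the paper: the same three-way split of the error, the same single integration by parts via \eqref{3.281} with the boundary term killed by $f(b)-P_Nf(b)=0$ and $\alpha>-1$, and the same double application of Lemma 3.2 (extended to $J_{m+1}$) to the $x^{\alpha-1}e(x)$ and $x^\alpha e'(x)$ pieces. The only difference is that your integral-remainder identity $(e(x)/x)'=x^{-2}\int_0^x s\,e''(s)\,ds$ gives the bound $\tfrac12\|e''\|_\infty$ in place of the paper's Maclaurin-expansion bound $\tfrac32\|e''\|_\infty$ in \eqref{3.34}, so you actually obtain the slightly sharper constant $1+\tfrac12|\alpha-m-1|$, which of course still implies \eqref{3.31}.
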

\begin{proof} From the definition of $P_Nf(x)$, it is obvious that
\begin{equation}\label{3.32}f(0)-P_Nf(0)=f(b)-P_Nf(b)=0.\end{equation}
In the following the proof will be split into three parts.

(1) For the first inequality in \eqref{3.31}, it follows at once
from \eqref{3.251} that
\begin{eqnarray}
\nonumber
|I_1[f]-I_1^{CCF}[f]|&=&\bigg|\int_0^bx^\alpha(f(x)-P_Nf(x))J_m(\omega
x)dx\bigg|\\
\nonumber&\leq&\int_0^b|x^\alpha(f(x)-P_Nf(x))J_m(\omega
x) |dx\\
\nonumber&\leq&\int_0^bx^\alpha dx||f(x)-P_Nf(x)||_\infty\\
\nonumber&=&\frac{b^{\alpha+1}}{\alpha+1}||f(x)-P_Nf(x)||_\infty.
\end{eqnarray}

(2) By using Lemma 3.2 and the identities \eqref{3.32}, the second
inequality in \eqref{3.31} follows that
\begin{eqnarray}
\nonumber
|I_1[f]-I_1^{CCF}[f]|&=&\bigg|\int_0^bx^\alpha(f(x)-P_Nf(x))J_m(\omega
x)dx\bigg|\\
\nonumber&\leq&C_1(\omega)(|f(b)-P_Nf(b)|+\int_0^b|f'(x)-P'_Nf(x)|dx)\\
\nonumber&\leq&bC_1(\omega)||f'(x)-P'_Nf(x)||_\infty.
\end{eqnarray}

(3) Since $f(x)-P_Nf(x)$ and $f'(x)-P'_Nf(x)$ can be expanded in
terms of {\it Maclaurin expansions}, as follows,
\begin{eqnarray}
\nonumber
f(x)-P_Nf(x)&=&f(0)-P_Nf(0)+(f'(0)-P'_Nf(0))x+\frac{f''(\eta_1)-P''_Nf(\eta_1)}{2}x^2\\
\nonumber&=&(f'(0)-P'_Nf(0))x+\frac{f''(\eta_1)-P''_Nf(\eta_1)}{2}x^2,\
\ 0<\eta_1<x,\\
\nonumber
f'(x)-P'_N(x)&=&f'(0)-P'_Nf(0)+(f''(\eta_2)-P''_Nf(\eta_2))x, \ \
0<\eta_2<x,
\end{eqnarray}
then we have
\begin{eqnarray}
\nonumber
\bigg|\bigg(\frac{f(x)-P_Nf(x)}{x}\bigg)'\bigg|&=&\bigg|\frac{x(f'(x)-P'_Nf(x))-(f(x)-P_Nf(x))}{x^2}\bigg|\\
\nonumber&=&\big|(f''(\eta_2)-P''_Nf(\eta_2))-\frac{1}{2}(f''(\eta_1)-P''_Nf(\eta_1))\big|\\
\nonumber&\leq&\big|f''(\eta_2)-P''_Nf(\eta_2)\big|+\frac{1}{2}\big|f''(\eta_1)-P''_Nf(\eta_1)\big|\\
\label{3.34}&\leq&\frac{3}{2}||f''(x)-P''_Nf(x)||_\infty.
\end{eqnarray}

By integrating by parts and using Lemma 3.2, together with
\eqref{3.281}, \eqref{3.32}, \eqref{3.34}, and due to the limit
$$\lim_{x\rightarrow0+}\frac{f(x)-P_Nf(x)}{x}=f'(0)-P'_Nf(0),$$ we
then obtain
\begin{eqnarray}
\nonumber
|I_1[f]-I_1^{CCF}[f]|&=&\bigg|\int_0^bx^{\alpha+1}\frac{f(x)-P_Nf(x)}{x}J_m(\omega
x)dx\bigg|\\
\nonumber&=&\frac{1}{\omega}\bigg|\int_0^bx^{\alpha-m}\frac{f(x)-P_Nf(x)}{x}d[x^{m+1}J_{m+1}(\omega
x)]\bigg| \\
\nonumber&=&\bigg|\frac{1}{\omega}x^{\alpha+1}\frac{f(x)-P_Nf(x)}{x}J_{m+1}(\omega
x)\big|_0^b\\
\nonumber&&-\frac{1}{\omega}\int_0^bx^{m+1}J_{m+1}(\omega
x)d\bigg[x^{\alpha-m}\frac{f(x)-P_Nf(x)}{x}\bigg]\bigg| \\
\nonumber&\leq&\frac{1}{\omega}\big|\alpha-m-1\big|\bigg|\int_0^bx^{\alpha}\frac{f(x)-P_Nf(x)}{x}J_{m+1}(\omega
x)dx\bigg|\\
\nonumber&&+\frac{1}{\omega}\bigg|\int_0^bx^\alpha[f'(x)-P'_Nf(x)]J_{m+1}(\omega
x)dx\bigg|\\
\nonumber&\leq&\bigg|\alpha-m-1\bigg|\frac{C_1(\omega)}{\omega}\bigg(\bigg|\frac{f(b)-P_Nf(b)}{b}\bigg|+\int_0^b\bigg|\bigg(\frac{f(x)-P_Nf(x)}{x}\bigg)'\bigg|dx\bigg)\\
\nonumber&&+\frac{C_1(\omega)}{\omega}\big(|f'(b)-P'_Nf(b)|+\int_0^b|(f'(x)-P'_Nf(x))'|dx\big)\\
\label{3.351}&\leq&\frac{C_1(\omega)}{\omega}[|f'(b)-P'_Nf(b)|+b(1+\frac{3}{2}|\alpha-m-1|)||f''(x)-P''_Nf(x)||_\infty].
\ \ \ \ \ \ \ \ \
\end{eqnarray}

We have thus proved the theorem.
\end{proof}

For $f\in C^3[0,b],$ the error bound of the CCF methods \eqref{2.5}
is presented as follows.
\begin{theorem}Assume that $f\in C^3[0,b].$ Then the absolute error
of the CCF methods \eqref{2.5} for $\omega\geq1,\alpha>-1$ and every
fixed $N$, satisfies
\begin{eqnarray}\label{3.35}
|I_2[f]-I_2^{CCF}[f]|\leq \min{\displaystyle\left\{
\begin{array}{lll}{\displaystyle\left\{\begin{array}{lll}\frac{b^{\alpha+1}(1-(\alpha+1)\ln
(b))}{(\alpha+1)^2}||f(x)-P_Nf(x)||_\infty,& \textrm{if}\ \ 0<b\leq 1,\\
\frac{2+b^{\alpha+1}((\alpha+1)\ln
(b)-1)}{(\alpha+1)^2}||f(x)-P_Nf(x)||_\infty,& \textrm{if}\ \ b>1,
\end{array}\right.}\\
bC_2(\omega)||f''(x)-P''_Nf(x)||_\infty,\\
\frac{3b}{2\omega}(C_1(\omega)+|\alpha-m|C_2(\omega))||f''(x)-P''_Nf(x)||_\infty\\
+\frac{C_2(\omega)}{\omega}[\frac{1}{b}|f'(b)-P'_Nf(b)|+\frac{7}{3}b||f'''(x)-P'''_Nf(x)||_\infty].
\end{array}\right\}}
\end{eqnarray}
\end{theorem}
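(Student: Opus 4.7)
The plan is to mirror the three-part structure of Theorem 3.1, adding the complications caused by the $\ln(x)$ factor. As before, the three entries of the min arise from (i) a trivial $L^\infty$ estimate, (ii) one application of Lemma 3.2 that exploits the endpoint interpolation, and (iii) one integration by parts against the Bessel derivative identity \eqref{3.281} followed by Lemma 3.2 on the residual integrals.

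For the first entry I would use $|J_m(\omega x)|\leq 1$ from \eqref{3.251} to pull $\|f-P_N f\|_\infty$ outside the integral, reducing the problem to the explicit evaluation of $\int_0^b x^\alpha|\ln x|\,dx$. Integration by parts gives the antiderivative $\frac{x^{\alpha+1}\ln x}{\alpha+1}-\frac{x^{\alpha+1}}{(\alpha+1)^2}$, and the sign change of $\ln x$ at $x=1$ forces the stated case split: for $0<b\leq 1$ the logarithm is non-positive throughout $(0,b]$, producing the first constant, while for $b>1$ one splits the integral at $x=1$ and adds the two pieces, which collapses to the second constant.

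For the second entry I would feed $g:=f-P_N f$ into Lemma 3.2. Since the Clenshaw–Curtis nodes include both endpoints of $[0,b]$, identity \eqref{3.32} gives $g(0)=g(b)=0$, killing the $|g(b)|$ term and leaving $C_2(\omega)\int_0^b|f'-P'_N f|\,dx\leq bC_2(\omega)\|f'-P'_N f\|_\infty$. Because $f-P_N f$ has two zeros on $[0,b]$, Rolle supplies some $\xi\in(0,b)$ with $(f'-P'_N f)(\xi)=0$, which lets me convert $\|f'-P'_N f\|_\infty$ into a bound in terms of $\|f''-P''_N f\|_\infty$, recovering the second line of \eqref{3.35}.

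For the third entry I would imitate the derivation of \eqref{3.351}. Using \eqref{3.281} in the form $x^\alpha J_m(\omega x)\,dx=\omega^{-1}x^{\alpha-m-1}\,d[x^{m+1}J_{m+1}(\omega x)]$ and integrating by parts, the boundary term at $x=b$ vanishes because $(f-P_N f)(b)=0$ and at $x=0$ because $J_{m+1}(\omega x)=O(x^{m+1})$ combines with $(f-P_N f)(x)=O(x)$ and $\alpha>-1$ to make $x^{\alpha+m+2}\ln x\to 0$. Differentiating $x^{\alpha-m-1}\ln x\,(f-P_N f)(x)$ produces three pieces: $A_1$ retains both $\ln x$ and $(f-P_N f)/x$ with a coefficient $|\alpha-m-1|$; $A_2$ arises from $(\ln x)'=1/x$ and drops the logarithm; and $A_3$ carries $f'-P'_N f$. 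In $A_1$ and $A_2$ I would absorb one power of $x$ into the smooth quotient $h(x):=(f-P_N f)(x)/x$ (which satisfies $h(b)=0$ and $h(0)=f'(0)-P'_N f(0)$), apply Lemma 3.2—yielding $C_2$ and $C_1$ respectively—and bound $|h'(x)|\leq\tfrac{3}{2}\|f''-P''_N f\|_\infty$ via the Maclaurin argument \eqref{3.34}. For $A_3$ I would apply Lemma 3.2 to $f'-P'_N f$ directly to get $|f'(b)-P'_N f(b)|+\int_0^b|f''-P''_N f|\,dx$; because $(f-P_N f)$ vanishes at three or more points, Rolle gives a zero of $f''-P''_N f$ inside $(0,b)$, which lets me bound $\|f''-P''_N f\|_\infty$ by a multiple of $b\|f'''-P'''_N f\|_\infty$, yielding the final line of \eqref{3.35}.

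The main obstacle is the bookkeeping in step three. The factor $x^{\alpha-m-2}$ that falls out of differentiation lowers the exponent below $-1$, so Lemma 3.2 does not apply directly; the key trick is to reabsorb a power of $x$ into $(f-P_N f)/x$, which is smooth precisely because of the endpoint vanishing \eqref{3.32}. I also need to verify carefully that the boundary terms at $x=0$ really vanish, which uses simultaneously $\alpha>-1$, $\mathrm{Re}(m)>-1$, and the Clenshaw–Curtis endpoint interpolation, and to track constants so that the $C_1$, $C_2$, $\|f''-P''_N f\|_\infty$, and $\|f'''-P'''_N f\|_\infty$ contributions line up with the statement of the theorem.
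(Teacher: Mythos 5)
Your overall strategy is the paper's: the same three-way split, the same use of \eqref{3.251} for the first entry, Lemma 3.2 plus the endpoint identity \eqref{3.32} for the second, and one integration by parts via \eqref{3.281} followed by Lemma 3.2 for the third. The first entry is handled exactly as in the paper. The gaps are in the bookkeeping of entries (2) and (3), where your argument does not reproduce the constants actually claimed in \eqref{3.35}. For entry (2), the paper proves this line ``by the same method as the second inequality in \eqref{3.31}'', which yields $bC_2(\omega)\|f'-P_N'f\|_\infty$; the printed $f''-P_N''f$ is almost certainly a typo for $f'-P_N'f$. Your attempt to force a second derivative by inserting a Rolle step costs an extra factor: from a zero $\xi$ of $f'-P_N'f$ you only get $\|f'-P_N'f\|_\infty\le b\|f''-P_N''f\|_\infty$, hence $b^2C_2(\omega)\|f''-P_N''f\|_\infty$, which is weaker than the stated line whenever $b>1$ (and the Rolle step itself needs $N\ge2$).

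For entry (3) the discrepancy is structural. After multiplying the derivative of $x^{\alpha-m-1}\ln(x)\,(f-P_Nf)$ by $x^{m+1}$ you recombine $x^{\alpha+1}\ln(x)h'$ with $-x^{\alpha}\ln(x)h$ (where $h=(f-P_Nf)/x$) to produce $x^{\alpha}\ln(x)(f'-P_N'f)$; this shifts the coefficient of the $C_2$-term to $|\alpha-m-1|$ instead of the stated $|\alpha-m|$, and your treatment of that last piece gives $C_2(\omega)\bigl(|f'(b)-P_N'f(b)|+b\|f''-P_N''f\|_\infty\bigr)$, which after another Rolle step becomes $C_2(\omega)\bigl(|f'(b)-P_N'f(b)|+Cb^2\|f'''-P_N'''f\|_\infty\bigr)$ --- not the stated $C_2(\omega)\bigl(\tfrac1b|f'(b)-P_N'f(b)|+\tfrac73 b\|f'''-P_N'''f\|_\infty\bigr)$, and again off by a factor of $b$ in the boundary term. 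To land on the stated constants you must keep the three raw product-rule pieces $(\alpha-m)x^{\alpha}\ln(x)h$, $x^{\alpha}h$ and $x^{\alpha+1}\ln(x)h'$ and apply Lemma 3.2 to $h$, $h$ and $h'$ respectively: then $h(b)=0$, $\int_0^b|h'|\le\tfrac{3b}{2}\|f''-P_N''f\|_\infty$ by \eqref{3.34}, $|h'(b)|=\tfrac1b|f'(b)-P_N'f(b)|$, and $\int_0^b|h''|\le\tfrac{7b}{3}\|f'''-P_N'''f\|_\infty$ by \eqref{3.36}, which is exactly the third line of \eqref{3.35} (the last piece uses Lemma 3.2 with exponent $\alpha+1$, absorbed into $C_2(\omega)$ up to a constant). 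Your bounds are correct and of the same asymptotic order in $\omega$, but as written they do not prove the inequality as stated.
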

\begin{proof}
(1) For the first inequalities in \eqref{3.35}, it follows that
\begin{eqnarray}
\nonumber |I_2[f]-I_2^{CCF}[f]|&=&\bigg|\int_0^bx^\alpha
\ln(x)(f(x)-P_Nf(x))J_m(\omega
x)dx\bigg|\\
\nonumber&\leq&\int_0^b|x^\alpha \ln(x)(f(x)-P_Nf(x))J_m(\omega
x) |dx\\
\nonumber&\leq&\int_0^b|x^\alpha \ln(x)|dx||f(x)-P_Nf(x)||_\infty\\
\nonumber&=&{\displaystyle\left\{\begin{array}{lll}\frac{b^{\alpha+1}(1-(\alpha+1)\ln
(b))}{(\alpha+1)^2}||f(x)-P_Nf(x)||_\infty,& \textrm{if}\ \ 0<b\leq 1,\\
\frac{2+b^{\alpha+1}((\alpha+1)\ln
(b)-1)}{(\alpha+1)^2}||f(x)-P_Nf(x)||_\infty,& \textrm{if}\ \ b>1.
\end{array}\right.}
\end{eqnarray}

(2) The second inequality in \eqref{3.35} can be proved by the same
method as employed in the proof of the second inequality in
\eqref{3.31}.

(3) Since $f(x)-P_Nf(x), f'(x)-P'_Nf(x)$ and $f''(x)-P''_Nf(x)$ can
be expanded in terms of {\it Maclaurin expansions}, as follows,
\begin{eqnarray}
\nonumber
f(x)-P_Nf(x)&=&f(0)-P_Nf(0)+(f'(0)-P'_Nf(0))x+\frac{f''(0)-P''_Nf(0)}{2}x^2+\frac{f'''(\xi_1)-P''_Nf(\xi_1)}{6}x^3\\
\nonumber&=&(f'(0)-P'_Nf(0))x+\frac{f''(0)-P''_Nf(0)}{2}x^2+\frac{f'''(\xi_1)-P''_Nf(\xi_1)}{6}x^3,\
\ 0<\xi_1<x,\\
\nonumber
f'(x)-P'_Nf(x)&=&f'(0)-P'_Nf(0)+(f''(0)-P''_Nf(0))x+\frac{f'''(\xi_2)-P''_Nf(\xi_2)}{2}x^2,
\ \ 0<\xi_2<x,\\
\nonumber
f''(x)-P''_Nf(x)&=&f''(0)-P''_Nf(0)+(f'''(\xi_3)-P'''_Nf(\xi_3))x, \
\ 0<\xi_3<x,
\end{eqnarray}
we then have
\begin{eqnarray}
\nonumber
\bigg|\bigg(\frac{f(x)-P_Nf(x)}{x}\bigg)''\bigg|&=&\bigg|\frac{x^2(f''(x)-P''_Nf(x))-2x(f'(x)-P'_Nf(x))+2(f(x)-P_Nf(x))}{x^3}\bigg|\\
\nonumber&=&\big|(f'''(\xi_3)-P'''_Nf(\xi_3))-(f'''(\xi_2)-P'''_Nf(\xi_2))-\frac{1}{3}(f'''(\xi_1)-P'''_Nf(\xi_1))\big|\\
\label{3.36}&\leq&\frac{7}{3}||f'''(x)-P'''_Nf(x)||_\infty.
\end{eqnarray}

By integrating by parts and using Lemma 3.2, together with
\eqref{3.281}, \eqref{3.32}, \eqref{3.36}, and noting that
$$\lim_{x\rightarrow0+}x^{\alpha+1}\ln(x)\frac{f(x)-P_Nf(x)}{x}=0,$$ using the same argument as in the proof of \eqref{3.351}, we
can easily carry out the proof of the third inequality in
\eqref{3.35}.

The proof of the theorem is now complete.
\end{proof}
\begin{remark}By transferring the integra interval $[0, b]$ into
$[-1,1]$, these estimates $||f(x)-P_Nf(x)||_\infty$,
$||f'(x)-P'_Nf(x)||_\infty,||f''(x)-P''_Nf(x)||_\infty,||f'''(x)-P'''_Nf(x)||_\infty$
in Theorems 3.1-3.2, have been given by Lemma 3.3.
\end{remark}

As shown in the above Theorems 3.1-3.2, for fixed $N$, we give these
error bounds in inverse powers of $\omega$. In the sequel, for fixed
$\omega$, we consider error bounds in inverse powers of $N$. Here,
to derive these error bounds, we first establish the following Lemma
3.4.

\begin{lemma} For every $j\geq1$ and fixed $\omega$, it is true that
\begin{eqnarray}
\label{3.38}\int_0^bx^\alpha T^{**}_j(x)J_m(\omega
x)dx&=&{\displaystyle\left\{
\begin{array}{ll}O(\frac{1}{j^{2\alpha+2}}), & \textrm{if}\ -1<\alpha<-\frac{1}{2},
\\O(\frac{1}{j^2}), & \textrm{if}\ \
\alpha\geq-\frac{1}{2},\end{array}\right.}\\
\label{3.39}\int_0^bx^\alpha \ln (x) T^{**}_j(x)J_m(\omega
x)dx&=&{\displaystyle\left\{
\begin{array}{ll}O(\frac{1+\ln(j)}{j^{2\alpha+2}}), & \textrm{if}\ -1<\alpha<-\frac{1}{2},
\\O(\frac{1}{j}), & \textrm{if}\ \
\alpha=-\frac{1}{2},\\
O(\frac{1}{j^2}), & \textrm{if}\ \ \alpha>-\frac{1}{2}.
\end{array}\right.}
\end{eqnarray}
\end{lemma}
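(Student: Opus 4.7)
The plan is to convert each integral into a Fourier cosine integral by a Chebyshev substitution and then read off the asymptotics in $j$ from the singular behavior of the integrand at the endpoints.

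First I would apply the substitution $x=\tfrac{b}{2}(1+\cos\phi)=b\cos^{2}(\phi/2)$, which maps $[0,b]$ onto $[0,\pi]$ and turns the shifted Chebyshev polynomial into $T^{**}_j(x)=T_j(2x/b-1)=T_j(\cos\phi)=\cos(j\phi)$. Combined with $dx=-\tfrac{b}{2}\sin\phi\,d\phi$ and the half-angle identities this yields
\begin{equation*}
\int_0^b x^\alpha T^{**}_j(x)J_m(\omega x)\,dx=b^{\alpha+1}\int_0^\pi F(\phi)\cos(j\phi)\,d\phi,
\end{equation*}
where $F(\phi)=\cos^{2\alpha+1}(\phi/2)\sin(\phi/2)J_m(b\omega\cos^{2}(\phi/2))$; the integral in \eqref{3.39} transforms the same way, but carries an extra factor $\ln b+2\ln\cos(\phi/2)$ in the integrand.

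Next I would examine $F$ at the two endpoints. At $\phi=0$ the factor $\sin(\phi/2)$ provides a simple zero while the remaining factors are analytic in $\phi$, so $F$ is smooth there with $F(0)=0$. Writing $\psi=\pi-\phi$ and using $\cos(\phi/2)=\sin(\psi/2)$ one has
\begin{equation*}
F(\pi-\psi)=\sin^{2\alpha+1}(\psi/2)\cos(\psi/2)J_m\!\left(b\omega\sin^{2}(\psi/2)\right)\sim C\,\psi^{2\alpha+1}\quad\text{as }\psi\to 0^+,
\end{equation*}
the remaining factor being analytic in $\psi$. In the logarithmic version the extra $\ln\cos(\phi/2)\sim\ln(\psi/2)$ augments the local model at $\psi=0$ to $\psi^{2\alpha+1}\ln\psi$.

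I would then read off the decay in $j$ from these local models. Fix a smooth cut-off $\chi$ equal to $1$ near $\psi=0$ and $0$ outside a small neighborhood of $\psi=0$, and split $F(\pi-\psi)=\chi(\psi)F(\pi-\psi)+(1-\chi(\psi))F(\pi-\psi)$. The complementary piece $(1-\chi)F$ vanishes in a neighborhood of $\phi=\pi$, combined with the smooth zero of $F$ at $\phi=0$ it is compactly smooth, so repeated integration by parts against $\cos(j\phi)$ yields arbitrarily fast decay. The singular piece is handled by the classical Mellin-type asymptotic
\begin{equation*}
\int_0^\infty \psi^\beta\cos(j\psi)\,d\psi=j^{-\beta-1}\Gamma(\beta+1)\cos\!\left(\tfrac{\pi(\beta+1)}{2}\right),\qquad -1<\beta<0,
\end{equation*}
with $\beta=2\alpha+1$. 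For $-1<\alpha<-\tfrac{1}{2}$ the exponent $\beta\in(-1,0)$ produces the endpoint contribution $O(j^{-2\alpha-2})$, giving \eqref{3.38}. For $\alpha\geq-\tfrac{1}{2}$ the algebraic singularity at $\phi=\pi$ is weak enough to allow two integrations by parts against $\cos(j\phi)$, so the boundary terms produce the $O(1/j^2)$ bound. The logarithmic bound \eqref{3.39} then follows by differentiating the Mellin formula above with respect to $\beta$, which inserts a factor $\ln j$ when $-1<\alpha<-\tfrac{1}{2}$, degenerates into a pure $\ln\psi$ singularity that yields $O(1/j)$ at $\alpha=-\tfrac{1}{2}$, and reduces to $O(1/j^2)$ for $\alpha>-\tfrac{1}{2}$.

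The main obstacle will be the bookkeeping in the regular regime $\alpha\geq-\tfrac{1}{2}$, where $F$ is only Hölder continuous at $\phi=\pi$ and the second integration by parts has to be combined with the Mellin formula via the cut-off argument to show that the smooth remainder dominates the singular correction. Throughout, the constants in the $O$-symbols depend on $\alpha$, $m$, $\omega$ and $b$ but not on $j$.
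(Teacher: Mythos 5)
Your reduction is essentially the paper's: the substitution $x=b\cos^{2}(\phi/2)$ is the same change of variables as the chain $x=\frac{b}{2}+\frac{b}{2}t$, $t=\cos\theta$ that produces \eqref{3.40} (the paper merely adds $\theta=\pi-2\varphi$ to fold the interval onto $[0,\pi/2]$), and both arguments then split into endpoint asymptotics for $-1<\alpha<-\frac{1}{2}$ versus integration by parts for $\alpha\ge-\frac{1}{2}$. Where the paper treats the singular case by rescaling $\varphi=u/(2j)$ in \eqref{3.41} and appealing to convergence of the resulting improper integral, you use a cut-off plus the Mellin formula; these are two dressings of the same Erd\'elyi-type endpoint analysis, and yours is the more systematic one (it also delivers the $\ln j$ in \eqref{3.39} by differentiation in $\beta$, where the paper juggles $\ln\sin(u/2j)=\ln(\sin(u/2j)/(u/2j))+\ln(2j)-\ln u$). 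One overclaim: $(1-\chi)F$ is not compactly supported in $(0,\pi)$, because $F$ has only a simple zero at $\phi=0$; the second integration by parts therefore leaves a boundary term proportional to $F'(0)/j^{2}$, so the regular piece is $O(1/j^{2})$, not rapidly decaying. This is harmless, since $O(1/j^{2})$ sits inside every claimed bound.

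The genuine gap is the case $-\frac{1}{2}<\alpha<0$ of the $O(1/j^{2})$ claim, which you flag as ``bookkeeping'' but resolve incorrectly. Two integrations by parts are not available there, since $F''(\pi-\psi)\sim\psi^{2\alpha-1}$ is not integrable; and the assertion that the smooth remainder dominates the singular correction is refuted by your own Mellin formula: writing $F(\pi-\psi)=\psi^{2\alpha+1}h(\psi)$ with $h$ smooth and $h(0)\propto J_m(0)$, the singular piece contributes $\Gamma(2\alpha+2)\cos(\pi(\alpha+1))\,h(0)\,j^{-2\alpha-2}$ to leading order (take $\beta=2\alpha+1\in(0,1)$), and $j^{-2\alpha-2}$ dominates $j^{-2}$ in this range. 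For $m=0$ none of the three factors in that coefficient vanishes, so the integral is genuinely of order $j^{-2\alpha-2}$ and the $O(1/j^{2})$ bound is not reachable by any rearrangement of your argument. The only mechanism that can restore it is the one your local model discards by calling the remaining factor ``analytic'': $J_m(b\omega\sin^{2}(\psi/2))\sim c\,\psi^{2m}$ raises the endpoint exponent to $2\alpha+2m+1$ and yields $O(j^{-2\alpha-2m-2})$, which is $O(j^{-2})$ exactly when $\alpha+m\ge0$ --- consistent with the $j^{-2\alpha-2m-2}$ term in Piessens's expansion \eqref{3.421}. The same objection applies to the logarithmic bound $O(1/j^{2})$ for $-\frac{1}{2}<\alpha\le0$. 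To be fair, the paper's own proof of this case is the single sentence ``integrating \eqref{3.40} by parts twice'' and is open to exactly the same criticism; but as written your argument does not close this case.
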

\begin{proof} For $-1\leq
t\leq1$, three transformations $x=\frac{b}{2}+\frac{b}{2}t, $
$t=\cos\theta$ and $\theta=\pi-2\varphi$, yields that
\begin{equation}\label{3.40}
\int_0^bx^\alpha T^{**}_j(x)J_m(\omega
x)dx=(-1)^j2b^{\alpha+1}\int_0^{\frac{\pi}{2}}\cos(2j\varphi)\sin^{2\alpha+1}(\varphi)\cos(\varphi)J_m(b\omega
\sin^2(\varphi))d\varphi.
\end{equation}

(1) In the case of $\alpha\geq-\frac{1}{2}$: Based on differential
relations $\cos(2j\varphi)d\varphi=\frac{1}{2j}d\sin(2j\varphi)$ and
$\sin(2j\varphi)d\varphi=-\frac{1}{2j}d\cos(2j\varphi),$ we can
derive the second identity in \eqref{3.38} by integrating
\eqref{3.40} by parts twice.

(2) In the case of $-1<\alpha<-\frac{1}{2}$: Setting
$\varphi=\frac{u}{2j},$ we have

\begin{eqnarray}\label{3.41}
\nonumber &&\int_0^bx^\alpha T^{**}_j(x)J_m(\omega x)dx\\
&=&(-1)^j\frac{b^{\alpha+1}}{2^{2\alpha+1}}\frac{1}{j^{2\alpha+2}}\int_0^{j\pi}u^{2\alpha+1}\cos(u)\left(\frac{\sin(\frac{u}{2j})}{\frac{u}{2j}}\right)^{2\alpha+1}\cos(\frac{u}{2j})J_m(b\omega
\sin^2(\frac{u}{2j}))du.
\end{eqnarray}
Now that the right-side improper integral in \eqref{3.41} is
convergent, it is evident to see that the first identity in
\eqref{3.38} holds.

Similarly, according to the fact that \begin{eqnarray}\nonumber
&&\int_0^bx^\alpha\ln(x) T^{**}_j(x)J_m(\omega x)dx\\
\nonumber&=&(-1)^j4b^{\alpha+1}\int_0^{\frac{\pi}{2}}\cos(2j\varphi)\sin^{2\alpha+1}(\varphi)\ln(\sin(\varphi))\cos(\varphi)J_m(b\omega
\sin^2(\varphi))d\varphi\\
\nonumber&&+[\ln(b)+(2^\alpha-1)\ln2]\int_0^bx^\alpha
T^{**}_j(x)J_m(\omega x)dx,
\end{eqnarray}
and the logarithmic relation
$$\ln\bigg(\sin\bigg(\frac{u}{2j}\bigg)\bigg)=\ln\bigg(\frac{\sin(\frac{u}{2j})}{\frac{u}{2j}}\bigg)+\ln(2j)-\ln(u),$$
together with the assertion \eqref{3.38}, by the same procedure in
the proof of \eqref{3.38}, we then obtain the desired result
\eqref{3.39}. We have thus proved the lemma.
\end{proof}
\begin{remark}Using the asymptotic theory of Fourier integrals (see
Erd$\acute{e}$lyi\cite{Erdelyi1,Erdelyi}), Piessens
\cite{Kythe,Piessens} established this asymptotic expansion for
$j\rightarrow\infty$:
\begin{eqnarray}\label{3.421}
\nonumber &&\int_0^1x^\alpha T^*_j(x)J_m(\omega x)dx\\
\nonumber&=&2^{-\alpha-1}\int_{-1}^1(1+x)^\alpha T_j(x)J_m(\omega
(x+1)/2)dx\\
&\sim&-2^{-\alpha-2}J_j(\omega)j^{-2}+(-1)^j2^{-3m-3\alpha-2}\frac{\omega^m}{\Gamma(m+1)}\cos((\alpha+1)\pi)\Gamma(2\alpha+2)j^{-2\alpha-2m-2}.\
\ \ \ \
\end{eqnarray}
Then, by differentiating \eqref{3.421} with respect to $\alpha$ and
using \eqref{2.221}, we have
\begin{eqnarray}\label{3.422}
\nonumber &&\int_0^1x^\alpha\ln(x) T^*_j(x)J_m(\omega x)dx\\
\nonumber&\sim&2^{-\alpha-2}\ln(2)J_j(\omega)j^{-2}+(-1)^{j+1}3\ln(2)2^{-3m-3\alpha-2}\frac{\omega^m}{\Gamma(m+1)}\cos((\alpha+1)\pi)\Gamma(2\alpha+2)j^{-2\alpha-2m-2}\\
\nonumber &&+(-1)^{j+1}\pi2^{-3m-3\alpha-2}\frac{\omega^m}{\Gamma(m+1)}\sin((\alpha+1)\pi)\Gamma(2\alpha+2)j^{-2\alpha-2m-2}\\
\nonumber &&+(-1)^{j}2^{-3m-3\alpha-1}\frac{\omega^m}{\Gamma(m+1)}\cos((\alpha+1)\pi)\Gamma(2\alpha+2)\psi_0(2\alpha+2)j^{-2\alpha-2m-2}\\
&&+(-1)^{j+1}2^{-3m-3\alpha-1}\frac{\omega^m}{\Gamma(m+1)}\cos((\alpha+1)\pi)\Gamma(2\alpha+2)j^{-2\alpha-2m-2}\ln(j).
\end{eqnarray}
However, for each fixed $j$, the asymptotic expansions \eqref{3.421}
and \eqref{3.422} are divergent for $\omega$. Moreover, the
estimates in \eqref{3.38} and  \eqref{3.39} can not be directly
derived from these asymptotic formulas \eqref{3.421} and
\eqref{3.422} particularly for $\alpha>0$.
\end{remark}

With the aid of the above Lemma 3.4, we derive error bounds in
inverse powers of $N$ as follows.
\begin{theorem}Assume that $f(x)$ has an absolutely continuous
$(k-1)$st derivative $f^{(k-1)}(x)$ on $[0,b]$ and a $k$th
derivative $f^{(k)}(x)$ of bounded variation $V_k$ for some
$k\geq1.$ Then, for every fixed $\omega$, and $N\geq k,$ the
absolute errors of the CCF methods \eqref{2.4} and \eqref{2.5}
satisfy
\begin{eqnarray}
\label{3.42}|I_1[f]-I_1^{CCF}[f]|&=&{\displaystyle\left\{
\begin{array}{ll}O(\frac{1}{N^{k+2\alpha+2}}), & \textrm{if}\ -1<\alpha<-\frac{1}{2},
\\O(\frac{1}{N^{k+1}}), & \textrm{if}\ \
\alpha\geq-\frac{1}{2},\end{array}\right.}\\
\label{3.43}|I_2[f]-I_2^{CCF}[f]|&=&{\displaystyle\left\{
\begin{array}{ll}O(\frac{\ln(N)}{N^{k+2\alpha+2}}), & \textrm{if}\ -1<\alpha<-\frac{1}{2},\\
O(\frac{\ln(N)}{N^{k+1}}), & \textrm{if}\ \alpha=-\frac{1}{2},\\
O(\frac{1}{N^{k+1}}), & \textrm{if}\ \ \alpha>-\frac{1}{2}.
\end{array}\right.}
\end{eqnarray}
\end{theorem}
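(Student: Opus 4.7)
The plan is to expand $f$ in its full shifted Chebyshev series $f(x)=\sum_{j=0}^\infty {}'' \hat{a}_j T_j^{**}(x)$ and split the error into an aliasing part and a tail part. The smoothness hypothesis (absolutely continuous $f^{(k-1)}$ with $f^{(k)}$ of bounded variation $V_k$) yields the classical Bernstein-type decay $|\hat{a}_j|\le C V_k/j^{k+1}$ for $j>k$. Because $P_N f$ is the Chebyshev interpolant at the nodes (2.2), discrete orthogonality gives the standard aliasing identity $a_j-\hat{a}_j=\sum_{\ell=1}^\infty(\hat{a}_{2\ell N+j}+\hat{a}_{2\ell N-j})$ for $0\le j\le N$, whence $|a_j-\hat{a}_j|=O(N^{-k-1})$ uniformly in $j\le N$.

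Setting $\mu_j:=\int_0^b x^\alpha T_j^{**}(x)J_m(\omega x)\,dx$, this decomposition yields
\[
I_1[f]-I_1^{CCF}[f]=\sum_{j=0}^{N}{}''(\hat{a}_j-a_j)\mu_j+\sum_{j=N+1}^{\infty}{}''\hat{a}_j\,\mu_j,
\]
and the two halves can be controlled using Lemma 3.4. For the aliasing sum I pull out the uniform factor $O(N^{-k-1})$ and am left with $\sum_{j=0}^N|\mu_j|$, which by \eqref{3.38} is $O(1)$ when $\alpha\ge-1/2$ and $O(N^{-2\alpha-1})$ when $-1<\alpha<-1/2$ (the partial sum of $j^{-2\alpha-2}$). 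For the tail I pair $|\hat{a}_j|=O(j^{-k-1})$ with $|\mu_j|$ from \eqref{3.38} and sum from $N+1$ to $\infty$; in each of the two ranges this tail is dominated by (or of the same order as) the aliasing contribution. Combining them gives \eqref{3.42}. Absolute convergence of the interchange of sum and integral is justified by $\sum|\hat{a}_j|<\infty$ together with $\int_0^b x^\alpha|J_m(\omega x)|\,dx<\infty$ (since $\alpha>-1$).

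The proof of \eqref{3.43} is entirely parallel: replace $\mu_j$ by $\widetilde{\mu}_j:=\int_0^b x^\alpha\ln(x)T_j^{**}(x)J_m(\omega x)\,dx$ and use \eqref{3.39} instead of \eqref{3.38}. The additional $\ln j$ in the moment bound for $\alpha\le-1/2$ survives partial summation and produces the $\ln N$ factors that appear in the first two cases of \eqref{3.43}, while for $\alpha>-1/2$ the $O(j^{-2})$ moment bound leads to the clean $O(N^{-k-1})$ rate exactly as in the first theorem.

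The main technical obstacle will be the sub-integrable range $-1<\alpha<-1/2$: there $\sum_j|\mu_j|$ diverges, and one must track the sharp growth $O(N^{-2\alpha-1})$ of its partial sums in order to obtain the sharp exponent $k+2\alpha+2$ rather than a weaker $k+1$. The delicate bookkeeping items are (i) verifying that the aliasing remainder $a_j-\hat{a}_j$ is uniformly $O(N^{-k-1})$ in $j\le N$ (not merely pointwise), (ii) handling the halved first and last terms in the ${}''$-sum, and (iii) matching the asymptotics of the aliasing and tail contributions so that neither is artificially enlarged. Once these are in place, the two theorem statements follow by direct summation of geometric-and-polynomial type series.
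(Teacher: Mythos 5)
Your proposal is correct and follows essentially the same route as the paper: both are the Trefethen-style aliasing argument that combines the aliasing identity at the Clenshaw--Curtis points, the moment decay of Lemma 3.4, and the coefficient decay $|a_j|=O(j^{-k-1})$, differing only in whether the aliased terms are grouped by blocks $pN+j$ (as in \eqref{3.48}--\eqref{3.51}) or absorbed into the differences $a_j-\hat{a}_j$ for $j\le N$. If anything, your version is more explicit than the paper's, which defers the final summation to the proof of Theorem 5.1 in \cite{Trefethen}.
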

\begin{proof}
Recalling that $T_n^{**}(x)=T_n(\frac{2x}{b}-1)$ and \eqref{2.2}, we
have
\begin{eqnarray}
\nonumber&&T_{2pN\pm j}^{**}(x_k)=T_{2pN\pm
j}\bigg(\frac{2x_k}{b}-1\bigg)=T_{2pN\pm
j}\bigg(\cos\bigg(\frac{k\pi}{N}\bigg)\bigg)=\cos\bigg((2pN\pm
j)\frac{k\pi}{N}\bigg)\\
\label{3.46}&&=\cos\bigg(\frac{jk\pi}{N}\bigg)=T_j\bigg(\cos\bigg(\frac{k\pi}{N}\bigg)\bigg)=T_{j}\bigg(\frac{2x_k}{b}-1\bigg)=T_{j}^{**}(x_k),
\end{eqnarray}
for each $j$, $k=0,1,\ldots,N$ and $p=1,2,\ldots$.

From \eqref{2.3} and \eqref{3.46}, together with the discrete
orthogonality of Chebyshev polynomials (see \cite[Section
4.6]{Mason}), we obtain
\begin{eqnarray}\label{3.47}
P_NT_{pN+j}^{**}(x)&=&{\displaystyle\left\{
\begin{array}{ll}T_{N-j}^{**}(x), & \textrm{if $p$ is odd},
\\T_{j}^{**}(x), & \textrm{if $p$ is even}.\end{array}\right.}
\end{eqnarray}
Then, we can see directly from \eqref{2.4}, \eqref{2.5} and
\eqref{3.47} that
\begin{eqnarray}
\label{3.48}I_1^{CCF}[T_{pN+j}^{**}(x)]&=&{\displaystyle\left\{
\begin{array}{ll}I_1[T_{N-j}^{**}(x)], & \textrm{if $p$ is odd},
\\I_1[T_{j}^{**}(x)], & \textrm{if $p$ is
even},\end{array}\right.}\\
\label{3.49}I_2^{CCF}[T_{pN+j}^{**}(x)]&=&{\displaystyle\left\{
\begin{array}{ll}I_2[T_{N-j}^{**}(x)], & \textrm{if $p$ is odd},
\\I_2[T_{j}^{**}(x)], & \textrm{if $p$ is even}.\end{array}\right.}
\end{eqnarray}

Therefore, we can deduce from Lemma 3.4, \eqref{3.48} and
\eqref{3.49}, that the sums of aliasing errors for the CCF methods
\eqref{2.4} and \eqref{2.5} can be estimated for $p$ being a
positive integer by
\begin{eqnarray}
\nonumber\sum_{j=0}^NI_1^{CCF}[T_{pN+j}^{**}(x)]&=&{\displaystyle\left\{
\begin{array}{ll}\sum_{j=0}^NI_1[T_{N-j}^{**}(x)]=\sum_{j=0}^N\int_0^bx^\alpha T^{**}_{N-j}(x)J_m(\omega
x)dx, & \textrm{if $p$ is odd},
\\\sum_{j=0}^NI_1[T_{j}^{**}(x)]=\sum_{j=0}^N\int_0^bx^\alpha T^{**}_{0}(x)J_m(\omega
x)dx, & \textrm{if $p$ is
even},\end{array}\right.}\\
\label{3.50}&=&{\displaystyle\left\{\begin{array}{ll}O(\frac{1}{N^{2\alpha+1}}),
& \textrm{if}\ -1<\alpha<-\frac{1}{2},
\\O(1), & \textrm{if}\ \
\alpha\geq-\frac{1}{2},\end{array}\right\} \textrm{$p$ is either odd or even;}}\\
\nonumber\sum_{j=0}^NI_2^{CCF}[T_{pN+j}^{**}(x)]&=&{\displaystyle\left\{
\begin{array}{ll}\sum_{j=0}^NI_2[T_{N-j}^{**}(x)]=\sum_{j=0}^N\int_0^bx^\alpha\ln(x) T^{**}_{N-j}(x)J_m(\omega
x)dx, & \textrm{if $p$ is odd},
\\\sum_{j=0}^NI_2[T_{j}^{**}(x)]=\sum_{j=0}^N\int_0^bx^\alpha \ln(x)T^{**}_{j}(x)J_m(\omega
x)dx, & \textrm{if $p$ is
even},\end{array}\right.}\\
\label{3.51}&=&{\displaystyle\left\{\begin{array}{ll}O(\frac{\ln(N)}{N^{2\alpha+1}}),
& \textrm{if}\ -1<\alpha<-\frac{1}{2},\\
O(\ln(N)), & \textrm{if}\ \alpha=-\frac{1}{2},
\\O(1), & \textrm{if}\ \
\alpha>-\frac{1}{2},\end{array}\right\} \textrm{$p$ is either odd or
even.}}
\end{eqnarray}

Moreover, we can find that Theorem 4.2 in \cite{Trefethen} implies
the estimate
\begin{eqnarray}
\label{3.491}|a_j|=O\bigg(\frac{1}{j^{k+1}}\bigg).
\end{eqnarray}

Combining \eqref{3.50}, \eqref{3.51} and \eqref{3.491}, and by a
similar way as shown in the proof of Theorem 5.1 in
\cite{Trefethen}, we can deduce the desired results \eqref{3.42} and
\eqref{3.43}.
\end{proof}

\section{Extension to a higher order method and  error estimate}
The choice of the extreme points as interpolation points for highly
oscillatory integrals is not only a technical necessity but also can
improve the accuracy significantly \cite{Olver,Xiang,Xiang2}.
Moreover, only by adding derivative information of $f(x)$ at the
endpoints $0$ and $b$ can the asymptotic order of the method be
improved \cite{Kang1,Kang,Olver,Xiang,Xiang2}. By the
above-mentioned particular observations, the higher order CCF
methods for (1.1) and (1.2) can be defined, respectively, by
\begin{eqnarray}
\nonumber I_1^{HCCF}[f]&=&\int_0^bx^\alpha P_{N+2s}f(x)J_m(\omega
x)dx\\
\label{4.55}&=&b^{\alpha+1}\sum_{j=0}^{N+2s}b_jM_j,\\
\nonumber I_2^{HCCF}[f]&=&\int_0^bx^\alpha \ln(x)P_{N+2s}f(x)J_m(\omega x)dx\\
\label{4.56}&=&b^{\alpha+1}\sum_{j=0}^{N+2s}b_j[\ln(b)M_j+\widetilde{M}_j],
\end{eqnarray}
where $P_{N+2s}f(x)$ is the special \emph{Hermite interpolation
polynomial} \cite{Kang1,Xiang2} of $f(x)$ at the Clenshaw-Curtis
points $c_n =\frac{b}{2}+\frac{b}{2}\cos\left(\frac{n\pi}{N}\right)$
on $[0,b]$ satisfying
\begin{equation}\label{4.57}
P_{N+2s}^{(k)}f(0)=f^{(k)}(0),\quad P_{N+2s}f(c_n)=f(c_n),\quad
P_{N+2s}^{(k)}f(b)=f^{(k)}(b),
\end{equation}
for $n=1,2,...,N-1$ and $k=0,1,...,s;$  it can be evaluated in
$O(N\log N)$ operations \cite{Xiang2}. Moreover, the polynomial
$P_{N+2s}f(x)$ can be expressed by
\begin{equation}
P_{N+2s}f(x)=\sum_{j=0}^{N+2s} b_jT_j^{**}(x),
\end{equation}
where the coefficients ${b_j}$ can be computed efficiently by an
algorithm \cite{Xiang2}.

Here, we establish error bounds in inverse powers of $N$ as follows.
\begin{theorem}Assume that $f\in C^{s+2}[0,b].$ Then the absolute
errors of the higher order CCF methods \eqref{4.55} and \eqref{4.56}
for $\alpha>-1$ and every fixed $N$, satisfy
\begin{eqnarray}
|I_1[f]-I_1^{HCCF}[f]|&=&O\bigg(\frac{C_1(\omega)}{\omega^{s+1}}\bigg),\\
|I_2[f]-I_2^{HCCF}[f]|&=&O\bigg(\frac{C_2(\omega)}{\omega^{s+1}}\bigg).
\end{eqnarray}
\end{theorem}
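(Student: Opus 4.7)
The plan is to mimic the proof of Theorem 3.1 (part 3), but to iterate the integration-by-parts argument $s+1$ times rather than once. Set $g(x) = f(x) - P_{N+2s}f(x)$, which lies in $C^{s+2}[0,b]$; the Hermite conditions \eqref{4.57} give $g^{(k)}(0) = g^{(k)}(b) = 0$ for $k = 0, 1, \ldots, s$. Hence $I_1[f] - I_1^{HCCF}[f] = \int_0^b x^\alpha g(x) J_m(\omega x)\,dx$, and the task is to show this quantity is $O(C_1(\omega)/\omega^{s+1})$.

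First I would rewrite the integrand as $x^{\alpha-m-1}g(x)\cdot x^{m+1}J_m(\omega x)$ and apply the differential identity \eqref{3.281} to integrate by parts. The boundary term vanishes because $g(0)=g(b)=0$ (and $x^{\alpha+s+1}\to 0$ at the origin), producing a factor $1/\omega$ together with two interior pieces of the form $\int_0^b x^{\alpha-1+j} g^{(1-j)}(x) J_{m+1}(\omega x)\,dx$, $j = 0,1$. Since $g$ has a zero of order $s+1$ at each endpoint, both $g$ and $g'$ still vanish there together with sufficiently many derivatives, and the same trick applies. After $s+1$ iterations I obtain a sum of $O(2^{s+1})$ integrals of the form $\int_0^b x^{\alpha-k} g^{(\ell)}(x) J_{m+s+1}(\omega x)\,dx$ with $k + \ell \le s+1$, all multiplied by $1/\omega^{s+1}$. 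Because $g^{(\ell)}(x) = O(x^{s+1-\ell})$ near $0$, the factor $x^{\alpha-k} g^{(\ell)}(x)$ is of order $x^{\alpha - k + s + 1 - \ell}$, which remains integrable since $k + \ell \le s+1$ and $\alpha > -1$.

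At the final stage Lemma 3.2 (applied with the Bessel order $m+s+1$ in place of $m$, which is harmless because its proof only uses the asymptotics of $\int_0^t y^\alpha J_m(y)\,dy$) dominates each remaining integral by $C_1(\omega)$ times a combination of $|g^{(\ell)}(b)|$ and $\int_0^b |g^{(\ell+1)}(t)|\,dt$. Lemma 3.3 applied to $f - P_{N+2s}f$ after the affine change of variable $x \mapsto 2x/b - 1$ shows that these quantities are bounded by constants depending only on $f$ and $N$, hence independent of $\omega$. Combining these estimates yields $|I_1[f] - I_1^{HCCF}[f]| = O(C_1(\omega)/\omega^{s+1})$. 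The second assertion follows from an entirely parallel argument applied to $\int_0^b x^\alpha \ln(x) g(x) J_m(\omega x)\,dx$: the logarithm causes no real difficulty since the vanishing of $g$ to order $s+1$ at $0$ kills the singularity of $\ln(x)$, and the concluding step invokes the $C_2(\omega)$ half of Lemma 3.2.

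The main obstacle will be the bookkeeping in the iterated integration by parts: one must verify that at every stage all boundary terms vanish (which is precisely where the high-order Hermite conditions are used) and that each interior integral remains of the form required by Lemma 3.2 with the same $C_1(\omega)$ (or $C_2(\omega)$) envelope. A secondary point is to bound the derivatives $g^{(\ell)}$ uniformly on $[0,b]$ in terms of $N$ alone; this is handled by Lemma 3.3 together with standard Markov-type estimates for the Hermite-interpolation error, whose derivative seminorms are controlled analogously to those of the ordinary Chebyshev interpolant appearing in Theorems 3.1 and 3.2.
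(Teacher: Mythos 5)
Your proposal is correct and follows essentially the same route as the paper: both rewrite the error via the differential relation \eqref{3.281}, integrate by parts $s+1$ times with all boundary terms annihilated by the Hermite conditions \eqref{4.57}, and close the argument with Lemma 3.2 exactly as in the third inequalities of Theorems 3.1--3.2. The only cosmetic difference is that the paper factors the interpolation error as $x^{s+1}\cdot\frac{f(x)-P_{N+2s}f(x)}{x^{s+1}}$ before iterating, while you track the orders of vanishing of $g^{(\ell)}$ directly; your appeal to Lemma 3.3 for uniform derivative bounds is unnecessary (and not literally applicable to the Hermite interpolant), since for fixed $N$ these quantities are trivially independent of $\omega$.
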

\begin{proof} Based on the differential relation \eqref{3.281}, we
have
\begin{eqnarray}
\nonumber I_1[f]-I_1^{HCCF}[f]&=&\int_0^bx^\alpha(f(x)-P_{N+2s}f(x))J_m(\omega x)dx\\
\label{4.61}&=&\frac{1}{\omega}\int_0^bx^{\alpha+s-m}\frac{f(x)-P_{N+2s}f(x)}{x^{s+1}}d[x^{m+1}J_{m+1}(\omega
x)],\\
\nonumber
I_2[f]-I_2^{HCCF}[f]&=&\int_0^bx^\alpha\ln(x)(f(x)-P_{N+2s}f(x))J_m(\omega
x)dx\\
\label{4.62}&=&\frac{1}{\omega}\int_0^bx^{\alpha+s-m}\ln(x)\frac{f(x)-P_{N+2s}f(x)}{x^{s+1}}d[x^{m+1}J_{m+1}(\omega
x)].
\end{eqnarray}
By using \eqref{4.57} and resorting to integrating \eqref{4.61} and
\eqref{4.62} by parts $s+1$-time, respectively, together with Lemma
3.2, we establish the desired results as in the proof of the third
inequalities in Theorems 3.1-3.2.
\end{proof}

\section{Conclusion}
This paper presents a series of new sharp error bounds of the
Clenshaw-Curtis-Filon methods for two classes of oscillatory Bessel
transforms with algebraic or logarithmic singularities. From the
above error bounds, it is worth noting that the required accuracy
can be obtained by using derivatives of $f(x)$ at the end-points or
adding the number of the interior node points. Moreover, the
Clenshaw-Curtis-Filon methods posses the advantageous property that
for fixed $N$ the accuracy increases when oscillation becomes
faster, which can be also obtained directly from these error bounds
in inverse powers of $\omega$.







%

\end{document}